\newtheorem{theorem}{Theorem}[section]
\newtheorem{definition}[theorem]{Definition}
\newtheorem{claim}[theorem]{Claim}
\newtheorem{lemma}[theorem]{Lemma}
\newtheorem{corollary}[theorem]{Corollary}
\newtheorem{remark}[theorem]{Remark}
\newcommand{\qedsymb}{\hfill{\rule{2mm}{2mm}}}
\renewenvironment{proof}[1][]{\begin{trivlist}
\item[\hspace{\labelsep}{\bf\noindent Proof#1:\/}] }{\qedsymb\end{trivlist}}
\def\calF{{\cal F}}
\def\calX{{\cal X}}
\def\calY{{\cal Y}}
\def\calI{{\cal I}}
\def\R{\mathbb{R}}
\def\S{\mathbb{S}}
\def\C{\mathbb{C}}
\newcommand{\cd}{\mathop{\mathrm{cd}}}
\newcommand\expectation{\mathop{{\mathbb{E}}}}
\def\lsoft{{l\kern-0.035cm\char39\kern-0.03truecm}}
\newcommand\dolnikov{Do{\lsoft}nikov}
\newcommand\kriz{Kriz}
\newcommand\barany{B\'ar\'any}
\newcommand{\Real}{\mathsf{Re}}
\newcommand{\Image}{\mathsf{Im}}
\newcommand{\eps}{\epsilon}
\renewcommand{\epsilon}{\varepsilon}
\newcommand{\rank}{\mathop{\mathrm{rank}}}
\newcommand{\minrank}{\mathop{\mathrm{minrk}}}
\newcommand{\Fset}{\mathbb{F}}         
\begin{document}

\title{{\bf Topological Bounds on the Dimension of Orthogonal Representations of Graphs}}

\author{
Ishay Haviv\thanks{School of Computer Science, The Academic College of Tel Aviv-Yaffo, Tel Aviv 61083, Israel.}
}

\date{}

\maketitle

\begin{abstract}
An orthogonal representation of a graph is an assignment of nonzero real vectors to its vertices such that distinct non-adjacent vertices are assigned to orthogonal vectors.
We prove general lower bounds on the dimension of orthogonal representations of graphs using the Borsuk-Ulam theorem from algebraic topology.
Our bounds strengthen the Kneser conjecture, proved by Lov\'asz in 1978, and some of its extensions due to \barany, Schrijver, \dolnikov, and \kriz.
As applications, we determine the integrality gap of fractional upper bounds on the Shannon capacity of graphs and the quantum one-round communication complexity of certain promise equality problems.
\end{abstract}

\section{Introduction}

A $t$-dimensional {\em orthogonal representation} of a graph $G=(V,E)$ over a field $\Fset$ is an assignment of a vector $u_i \in \Fset^t$ to each vertex $i \in V$ such that $\langle u_i,u_i \rangle \neq 0$ for every $i \in V$, and $\langle u_i,u_j \rangle = 0$ for every distinct non-adjacent vertices $i$ and $j$ in $G$. The {\em orthogonality dimension} of a graph $G$ over a field $\Fset$, denoted by $\xi_\Fset(G)$, is defined as the smallest integer $t$ for which there exists a $t$-dimensional orthogonal representation of $G$ over $\Fset$.
It is easy to verify that the orthogonality dimension of a graph is sandwiched between its independence number and its clique cover number, that is, for every graph $G$ and a field $\Fset$, $\alpha(G) \leq \xi_\Fset(G) \leq \chi(\overline{G})$.

The notion of orthogonal representations over the real field was introduced by Lov\'asz~\cite{Lovasz79} in the study of the Shannon capacity of graphs and was later involved in a geometric characterization of connectivity properties of graphs by Lov\'asz, Saks, and Schrijver~\cite{LovaszSS89}.
The orthogonality dimension over the complex field was used by de Wolf~\cite{deWolfThesis} in a characterization of the quantum one-round communication complexity of promise equality problems and by Cameron et al.~\cite{CameronMNSW07} in the study of the quantum chromatic number of graphs (see also~\cite{ScarpaS12,BrietBLPS15,BrietZ17}).
An extension of orthogonal representations, called orthogonal bi-representations, was introduced by Haemers~\cite{Haemers81} (see also~\cite{Peeters96}). Their smallest possible dimension, known as the minrank parameter of graphs, has found further applications in information theory, e.g.,~\cite{BBJK06,mazumdar2014duality,maleki2014index}, and in theoretical computer science, e.g.,~\cite{Valiant92,PudlakRS97,HavivL13} (see Section~\ref{sec:minrank}).

The present paper provides lower bounds on the orthogonality dimension of graphs over the real and complex fields using topological methods.
The use of topological methods in combinatorics was initiated in the study of the chromatic number of the Kneser graph defined as follows. For integers $d \geq 2s$, the Kneser graph $K(d,s)$ is the graph whose vertices are all the $s$-subsets of $[d]=\{1,\ldots,d\}$ where two sets are adjacent if they are disjoint.
In 1955, Kneser~\cite{Kneser55} observed that $K(d,s)$ admits a proper coloring with $d-2s+2$ colors, simply by coloring every set $A$ by the smallest integer in $A \cup \{d-2s+2\}$, and conjectured that fewer colors do not suffice.
In 1978, Lov\'asz~\cite{LovaszKneser} confirmed the conjecture by a breakthrough application of a tool from algebraic topology, the Borsuk-Ulam theorem~\cite{Borsuk33}.
Since then, topological methods have led to additional important results in combinatorics, discrete geometry, and theoretical computer science. For an in-depth background to the topic we refer the interested reader to Matou{\v{s}}ek's excellent book~\cite{MatousekBook}.

Following Lov\'asz's proof of the Kneser conjecture, several alternative proofs were given in the literature.
The simplest known proof of the conjecture is the one of Greene~\cite{Greene02}, inspired by a proof found by \barany~\cite{Barany78} soon after Lov\'asz's. Other proofs were given by \dolnikov~\cite{Dolnikov82}, Sarkaria~\cite{Sarkaria90}, and Matou{\v{s}}ek~\cite{Matousek04}, where Matou{\v{s}}ek's proof is the only one derived from a combinatorial argument (but the topological inspiration is still around).
Schrijver considered in~\cite{SchrijverKneser78} the graph $S(d,s)$ defined as the subgraph of $K(d,s)$ induced by the collection of all $s$-subsets of $[d]$ that include no consecutive integers modulo $d$ (that is, the $s$-subsets $A \subseteq [d]$ such that if $i \in A$ then $i+1 \notin A$, and if $d \in A$ then $1 \notin A$).
It was shown in~\cite{SchrijverKneser78} that $S(d,s)$ is a vertex-critical subgraph of $K(d,s)$, that is, its chromatic number is equal to that of $K(d,s)$ and a removal of any vertex of $S(d,s)$ decreases its chromatic number.

The various known proofs of the Kneser conjecture extend far beyond the chromatic number of the Kneser graph.
Extensions of these proofs to lower bounds on the chromatic number of general graphs were given by \dolnikov~\cite{Dolnikov82}, by \kriz~\cite{Kriz92}, and by Matou{\v{s}}ek and Ziegler~\cite{MatousekZ04} who generalized the proof techniques of Lov\'asz, Sarkaria, and \barany.
Such extensions are usually stated for a generalized Kneser graph $K(\calF)$, defined as the graph whose vertex set is a set system $\calF$ where two sets are adjacent if they are disjoint.
The generalized bounds are tight for the collection $\calF$ of all $s$-subsets of $[d]$ which corresponds to the graph $K(d,s)$, and some of them imply a tight lower bound on the chromatic number of the graph $S(d,s)$ as well.
It is not difficult to see that every graph is isomorphic to $K(\calF)$ for some set system $\calF$ (see, e.g.,~\cite{MatousekZ04}), hence the bounds hold for all graphs (but for certain graphs they are quite weak).
It was shown in~\cite{MatousekZ04} that the extensions of the proofs of Lov\'asz, Sarkaria, \barany, \dolnikov, and \kriz ~can be (almost) linearly ordered by strength, where Lov\'asz's original proof technique is the strongest.

\subsection{Topological Bounds on the Orthogonality Dimension}

We prove two general lower bounds on the orthogonality dimension of graphs over the real and complex fields and on the minrank parameter over the real field (see Definition~\ref{def:minrank}). For convenience, we state the results for the complements of the generalized Kneser graphs $K(\calF)$. As mentioned before, every graph can be represented in this form. The two bounds are proved using the Borsuk-Ulam theorem from algebraic topology.

The statement of our first bound is purely combinatorial. It strengthens the lower bounds on the chromatic number obtained independently by \dolnikov~\cite{Dolnikov82} and by \kriz~\cite{Kriz92}. Our proof is inspired by the proof of the Kneser conjecture by Greene~\cite{Greene02}.
To state the bound, we need the following definition (see, e.g.,~\cite[Section~3.4]{MatousekBook}).

\begin{definition}\label{def:cd2}
Let $\calF$ be a set system with ground set $[d]$ such that $\emptyset \notin \calF$.
The {\em $2$-colorability-defect} of $\calF$, denoted by $\cd_2(\calF)$, is the minimum size of a set $X \subseteq [d]$ such that the hypergraph on the vertex set $[d] \setminus X$ with the hyperedge set $\{ A \in \calF \mid A \cap X = \emptyset \}$ is $2$-colorable.
Equivalently, $\cd_2(\calF)$ is the minimum number of white elements in a coloring of $[d]$ by red, blue and white, such that no set of $\calF$ is completely red or completely blue (but it may be completely white).
\end{definition}

\begin{theorem}\label{thm:cd}
For every set system $\calF$ such that $\emptyset \notin \calF$,
\begin{enumerate}
  \item\label{thm:cd_itm:1} $\xi_\R(\overline{K(\calF)}) \geq \cd_2(\calF)$,
  \item\label{thm:cd_itm:2} $\xi_\C(\overline{K(\calF)}) \geq \frac{{\cd}_2(\calF)}{2}$, and
  \item\label{thm:cd_itm:3} ${\minrank}_\R(\overline{K(\calF)}) \geq \sqrt{\frac{{\cd}_2(\calF)}{2}}$.
\end{enumerate}
\end{theorem}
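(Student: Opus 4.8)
The plan is to establish all three items simultaneously from a single Borsuk-Ulam argument, following the spirit of Greene's proof of the Kneser conjecture. Suppose $\overline{K(\calF)}$ admits a $t$-dimensional orthogonal representation over $\R$ (or $\C$), assigning a nonzero vector $u_A \in \R^t$ (resp.\ $\C^t$) to each $A \in \calF$ with $\langle u_A, u_A \rangle \neq 0$ and $\langle u_A, u_B \rangle = 0$ whenever $A$ and $B$ are non-adjacent in $\overline{K(\calF)}$, i.e., whenever $A$ and $B$ are \emph{disjoint}. (Note adjacency in $\overline{K(\calF)}$ means $A \cap B \neq \emptyset$; the non-adjacent pairs are the disjoint ones, which is exactly the edge set of $K(\calF)$.) I want to deduce $t \geq \cd_2(\calF)$ in the real case. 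Set $n := \cd_2(\calF)$; the goal is to rule out $t \leq n-1$.

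The key construction: work on the sphere $S^{n-1} \subseteq \R^n$ and, for each point $x \in S^{n-1}$, split the ground set $[d]$ into a ``positive'' side $[d]^+_x$ and a ``negative'' side $[d]^-_x$ according to the sign of some fixed generic linear functionals indexed by $[d]$ applied to $x$ (the standard trick of placing $d$ points in general position on $S^{n-1}$ and looking at open hemispheres). For $x \in S^{n-1}$, consider the sets $A \in \calF$ contained entirely in $[d]^+_x$ and those contained entirely in $[d]^-_x$. For each of the two sides I form the subspace of $\R^t$ spanned by the vectors $u_A$ over the sets $A \in \calF$ lying on that side, and I build a map $f : S^{n-1} \to \R^t \oplus \R^t = \R^{2t}$ (something like recording, on each side, the component of a fixed vector in the span of that side's $u_A$'s, or a sum of projections) that is continuous and antipodal: $f(-x) = -f(x)$, since flipping $x$ swaps the two sides. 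Actually the cleaner route, matching Greene, is to produce an \emph{odd} map $S^{n-1} \to \R^{t} \times \R^{t}$ whose vanishing on a point $x$ forces a monochromatic-free $3$-coloring of $[d]$ with fewer than $n$ white elements — contradicting the definition of $\cd_2$. By Borsuk-Ulam, if $2t < n$ the map must vanish somewhere when we pad appropriately; but to get the sharp bound $t \geq n$ (not $2t \geq n$) in item~\ref{thm:cd_itm:1} I instead need the map to land in $\R^{n-1}$, which happens because an orthogonal representation lets me collapse the two side-spans into a single $(n-1)$-dimensional target using orthogonality: a set on the $+$ side and a set on the $-$ side are disjoint, hence their $u_A$'s are orthogonal, so the two spans sit orthogonally inside $\R^t$ and the relevant data is captured by a point of $\R^t$ of dimension at most $t \leq n-1$. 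Applying Borsuk-Ulam to the resulting odd map $S^{n-1} \to \R^{n-1}$ yields a zero $x_0$; at $x_0$ the vanishing means no $A \in \calF$ with $A \subseteq [d]^+_{x_0}$ and no $A \in \calF$ with $A \subseteq [d]^-_{x_0}$ contributes — i.e., coloring $[d]^+_{x_0}$ red, $[d]^-_{x_0}$ blue, and the (at most $n-1$, by genericity) remaining coordinates white gives a coloring with no monochromatic red or blue set of $\calF$ and fewer than $n$ white elements, contradicting $\cd_2(\calF) = n$.

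For item~\ref{thm:cd_itm:2}, a complex $t$-dimensional orthogonal representation is in particular a real $2t$-dimensional one (identify $\C^t \cong \R^{2t}$, take real parts of inner products — note that for disjoint $A,B$ the complex inner product vanishes so its real part does too, and $\langle u_A,u_A\rangle$ real and nonzero is preserved), so item~\ref{thm:cd_itm:1} gives $2t \geq \cd_2(\calF)$, hence $\xi_\C(\overline{K(\calF)}) \geq \cd_2(\calF)/2$. For item~\ref{thm:cd_itm:3}, I will invoke the standard relation (from Definition~\ref{def:minrank}, once we see it) that a real minrank-$r$ representation yields a real orthogonal representation of dimension $O(r^2)$ — concretely, $\xi_\R(G) \leq \binom{\minrank_\R(G)+1}{2}$ or similar — so combining with item~\ref{thm:cd_itm:1}, $\binom{\minrank_\R(\overline{K(\calF)})+1}{2} \gtrsim \cd_2(\calF)$, which rearranges to $\minrank_\R(\overline{K(\calF)}) \geq \sqrt{\cd_2(\calF)/2}$ after absorbing lower-order terms.

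**The main obstacle** I anticipate is the precise design of the odd map and the bookkeeping that lets orthogonality compress the target from $\R^{2t}$ down to $\R^{t}$ (equivalently, down to dimension $n-1$) — this is exactly where the orthogonal-representation hypothesis is used, rather than just chromatic information, and it is the step that must be engineered so that a zero of the map genuinely encodes a valid red/blue/white coloring with the white part controlled by $t$. Getting continuity at the ``boundary'' points of $S^{n-1}$ (where some functional vanishes, so the $\pm$ split is ambiguous) requires the usual care — either restrict to a generic placement so these form a lower-dimensional set and argue by a limiting/compactness argument, or thicken the construction so $f$ is manifestly continuous everywhere; I would follow the cleaner of the two as executed in Greene's argument. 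Verifying the antipodality $f(-x) = -f(x)$ and the claim ``$f(x_0) = 0 \Rightarrow$ monochromatic-free $3$-coloring with $< n$ white'' are then routine once the map is set up correctly.
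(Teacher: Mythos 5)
Your plan for Item~\ref{thm:cd_itm:1} leaves unspecified exactly the two ingredients the argument cannot do without. First, the map itself: the paper uses $f(x)=\sum_{A\in\calF}u_A\cdot\prod_{j\in A}\max(\langle x,y_j\rangle,0)$, which is automatically continuous because the weight of $u_A$ decays to zero as a point $y_j$ leaves the open hemisphere $H(x)$; your suggested alternatives (``the component of a fixed vector in the span of that side's $u_A$'s'', ``a sum of projections'') are not continuous, since spans jump when a set enters or leaves the positive side, and you explicitly defer this as ``the main obstacle'' rather than resolving it. Second, and more seriously, your key implication ``the map vanishes at $x_0$ $\Rightarrow$ no $A\in\calF$ lies on the positive or negative side'' is false as stated: a linear combination of nonzero vectors with positive coefficients can perfectly well vanish. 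The paper makes this step valid by first normalizing each $u_A$ so that its first nonzero coordinate is positive; then the disjointness of the two contributing families gives $\langle f(x),f(-x)\rangle=0$, which together with $f(x)=f(-x)$ forces both to be the zero vector, and the sign normalization rules out a nonempty positive combination summing to zero. Nothing in your sketch plays the role of this normalization, so the passage from a Borsuk--Ulam coincidence point to the red/blue/white coloring does not follow. (Your dimension bookkeeping --- an odd map such as $g(x)=f(x)-f(-x)$ from $\S^{n-1}$ into $\R^t\subseteq\R^{n-1}$ under the contradiction hypothesis $t\le n-1$, with at most $n-1$ white elements by general position --- would be fine once the map and the normalization are in place; it is just the contrapositive of the paper's direct argument on $\S^t$.)

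For Item~\ref{thm:cd_itm:3} you invoke a ``standard relation'' of the form $\xi_\R(G)\le\binom{\minrank_\R(G)+1}{2}$, i.e.\ that a rank-$r$ bi-representation yields an $O(r^2)$-dimensional orthogonal representation. No such relation is available: the natural tensor or symmetrization constructions fail, because for non-adjacent $i\neq j$ one is left with cross terms like $\langle u_i,u_j\rangle\langle v_i,v_j\rangle$ that need not vanish. This is precisely why the paper does not deduce Item~\ref{thm:cd_itm:3} from Item~\ref{thm:cd_itm:1}; instead it proves a dedicated lemma (Lemma~\ref{lemma:minrank_nonneg}) that every \emph{non-negative} matrix representing $\overline{K(\calF)}$ over $\R$ has rank at least $\cd_2(\calF)/2$ --- the Borsuk--Ulam argument is run on the concatenated vectors $w_A=u_A\circ v_A$, with non-negativity replacing the sign normalization (a nonempty positive combination would force $\langle w_1,w_2\rangle>0$, contradicting orthogonality) --- and then passes from an arbitrary representing matrix $M$ to the non-negative matrix $M'_{i,j}=M_{i,j}^2$, whose rank is at most ${\rank}_\R(M)^2$. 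So Item~\ref{thm:cd_itm:3} in your proposal has a genuine gap. Item~\ref{thm:cd_itm:2}, by contrast, is fine and even simpler than the paper's treatment: with the Hermitian inner product (which the paper's own argument implicitly requires), the embedding $\phi_t$ turns a complex $t$-dimensional orthogonal representation into a real $2t$-dimensional one, so Item~\ref{thm:cd_itm:2} follows from Item~\ref{thm:cd_itm:1}, whereas the paper reruns the whole argument using a complex-valued variant of the Borsuk--Ulam theorem.
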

\noindent
As an easy consequence of Theorem~\ref{thm:cd}, we obtain that the orthogonality dimension of the complement of the Kneser graph $K(d,s)$ over the reals is $d-2s+2$ (see Corollary~\ref{cor:xiKneser}).

Our second bound has a geometric nature. Its proof employs the approach of \barany~\cite{Barany78} to the Kneser conjecture and strengthens a general lower bound on the chromatic number that follows from~\cite{Barany78} and is given explicitly in~\cite{MatousekZ04}.
In what follows, $\S^t$ stands for the $t$-dimensional unit sphere $\{ x \in \R^{t+1} \mid \|x\|=1 \}$, and an open hemisphere of $\S^t$ is a set of the form $\{ z \in \S^t \mid \langle x,z \rangle >0 \}$ for some $x \in \S^t$.

\begin{theorem}\label{thm:geo}
Let $\calF$ be a set system with ground set $[d]$ such that $\emptyset \notin \calF$.
Suppose that for an integer $t \geq 2$ there exist points $y_1,\ldots,y_d \in \S^{t-2}$ such that every open hemisphere of $\S^{t-2}$ contains the points of $\{y_i \mid i \in A\}$ for some $A \in \calF$.
Then,
\begin{enumerate}
  \item\label{thm:geom_itm:1} $\xi_\R(\overline{K(\calF)}) \geq t$,
  \item\label{thm:geom_itm:2} $\xi_\C(\overline{K(\calF)}) \geq \frac{t}{2}$, and
  \item\label{thm:geom_itm:3} ${\minrank}_\R(\overline{K(\calF)}) \geq \sqrt{\frac{t}{2}}$.
\end{enumerate}
\end{theorem}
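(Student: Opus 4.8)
The plan is to reduce all three items to item~\ref{thm:geom_itm:1} and then prove that one using the Borsuk–Ulam theorem along the lines of B\'ar\'any's proof of the Kneser conjecture. First, observe that items~\ref{thm:geom_itm:2} and~\ref{thm:geom_itm:3} follow from item~\ref{thm:geom_itm:1} by standard reductions: a $t$-dimensional orthogonal representation over $\C$ yields a $2t$-dimensional one over $\R$ (replace each coordinate $a+bi$ by the pair $(a,b)$ in one vector and $(-b,a)$ in a companion, so that real inner products encode $\Real\langle u_i,u_j\rangle$), so $\xi_\R(\overline{K(\calF)})\le 2\,\xi_\C(\overline{K(\calF)})$; and a real orthogonal bi-representation of dimension $k$ (minrank witness) gives an orthogonal representation of dimension at most $k^2$ via tensoring the two sides, so $\xi_\R(\overline{K(\calF)})\le {\minrank}_\R(\overline{K(\calF)})^2$. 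Combining these with item~\ref{thm:geom_itm:1} gives items~\ref{thm:geom_itm:2} and~\ref{thm:geom_itm:3}.

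For item~\ref{thm:geom_itm:1}, suppose toward a contradiction that $\xi_\R(\overline{K(\calF)})\le t-1$, and let $\{v_A \mid A \in \calF\}\subseteq \R^{t-1}$ be an orthogonal representation of $\overline{K(\calF)}$: each $v_A$ is nonzero, and $\langle v_A,v_B\rangle=0$ whenever $A,B$ are non-adjacent in $\overline{K(\calF)}$, i.e.\ whenever $A\cap B\ne\emptyset$. We want to manufacture from this a point of $\S^{t-2}$ whose open hemisphere avoids every $\{y_i\mid i\in A\}$, contradicting the hypothesis. Define a map $f\colon \S^{t-2}\to\R^{t-1}$ by averaging the representation vectors of sets whose points lie in the hemisphere around $x$: roughly, $f(x)=\sum_{A\in\calF} \lambda_A(x)\, v_A$ where $\lambda_A(x)$ is some continuous nonnegative weight that is positive exactly when all of $\{y_i\mid i\in A\}$ lie in the open hemisphere $H_x=\{z:\langle x,z\rangle>0\}$ (for instance a suitable product of $\max(0,\langle x,y_i\rangle)$ over $i\in A$, or a partition-of-unity gadget). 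By the hypothesis, for every $x$ at least one $A$ has all its points in $H_x$, so at least one weight is positive; hence $f(x)$ is a nonnegative combination, with not all coefficients zero, of representation vectors of sets all contained in $H_x$.

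The key point is then a parity/antipodality argument: the vectors $v_A$ appearing with positive weight in $f(x)$ all correspond to sets that fit inside $H_x$, and since $H_x$ and $H_{-x}$ are disjoint, any set contributing to $f(x)$ and any set contributing to $f(-x)$ are \emph{disjoint}, hence adjacent in $\overline{K(\calF)}$ — so nothing forces their inner product to vanish, and I instead need the opposite mechanism. Concretely I would augment the target space: consider $g\colon\S^{t-2}\to\R^{t-1}\oplus\R = \R^{t}$ defined by $g(x)=(f(x),\,0)$ is not yet odd, so instead set up a $\Z_2$-map to $\S^{t-2}$ itself. Following B\'ar\'any, one normalizes and uses the orthogonality to show that $f(x)$ and $f(-x)$ can never be positive multiples of each other, which after composing with radial projection gives a continuous map $\S^{t-2}\to\S^{t-3}$ — impossible by Borsuk–Ulam once the map is shown to be antipodal (or, in the covering-space formulation, a $\Z_2$-map $\S^{t-2}\to\S^{t-2}$ of degree forced to be even yet equal to the degree of a map that factors through a lower sphere). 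The main obstacle is exactly this step: arranging the weights $\lambda_A$ so that the resulting map is genuinely continuous, genuinely defined on all of $\S^{t-2}$ (no zero denominators, using the hemisphere hypothesis), and genuinely $\Z_2$-equivariant after projection, so that Borsuk–Ulam applies cleanly. Once the topology is set up, the orthogonality relations $\langle v_A,v_B\rangle=0$ for intersecting $A,B$ are what prevent $f(x)$ and $f(-x)$ from being antipodal-compatible, and the dimension count $t-1$ versus $\S^{t-2}$ delivers the contradiction.
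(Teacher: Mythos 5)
Your reduction strategy and your sketch of Item~\ref{thm:geom_itm:1} both have genuine gaps. First, the core argument for Item~\ref{thm:geom_itm:1} is derailed by a convention error: in $\overline{K(\calF)}$ two distinct vertices are \emph{non-adjacent} exactly when the corresponding sets are \emph{disjoint}, so an orthogonal representation gives $\langle v_A,v_B\rangle=0$ for disjoint $A,B$ --- the opposite of what you wrote. This is not cosmetic; that disjointness--orthogonality is precisely the mechanism the proof needs. With the points $y_1,\ldots,y_d$ from the hypothesis, normalize each $v_A$ so its first nonzero coordinate is positive and set $f(x)=\sum_{A\in\calF} v_A\prod_{j\in A}\max(\langle x,y_j\rangle,0)$; then $f(x)\neq 0$ for every $x$ (look at the least coordinate where some contributing vector is nonzero), and every set contributing to $f(x)$ is disjoint from every set contributing to $f(-x)$, so $f(x)\perp f(-x)$. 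Even granting all this, orthogonality of nonzero vectors only gives $f(x)\neq f(-x)$, hence Borsuk--Ulam applied to $f$ yields only $t'\geq t-1$ where $t'=\xi_\R(\overline{K(\calF)})$; the missing idea that gains the last dimension is to pass to $g=f/\|f\|$, note that its first coordinate is non-negative so $g(x)$ is determined by its last $t'-1$ coordinates, and apply Borsuk--Ulam to that projection $\widetilde g:\S^{t-2}\to\R^{t'-1}$, which has no antipodal coincidence, giving $t'-1>t-2$. Your sketch never reaches this: having inverted the orthogonality condition you conclude that ``nothing forces the inner product to vanish,'' and the ensuing topology (a map $\S^{t-2}\to\S^{t-3}$, degree parity) is left as ``the main obstacle,'' i.e.\ the decisive step is not carried out.

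Second, your reduction of Item~\ref{thm:geom_itm:3} to Item~\ref{thm:geom_itm:1} does not work as stated. From a bi-representation $(u_i,v_i)$ of dimension $k$, ``tensoring the two sides'' gives $\langle u_i\otimes v_i,\,u_j\otimes v_j\rangle=\langle u_i,u_j\rangle\langle v_i,v_j\rangle$, which the bi-representation does not control for non-adjacent pairs (only the cross products $\langle u_i,v_j\rangle$ vanish), and symmetrized variants fail the same way; the inequality $\xi_\R\leq{\minrank}_\R^2$ you invoke is not justified. The paper takes a different route: replace a representing matrix $M$ by its entrywise square $M'$, which is non-negative, represents the same graph, and has $\rank_\R(M')\leq\rank_\R(M)^2$; then factor $M'=M_1^TM_2$ and rerun the Borsuk--Ulam argument on the concatenated columns $w_A=u_A\circ v_A$, where the contradiction comes from the non-negativity of all $\langle u_A,v_B\rangle$ together with $\langle u_A,v_A\rangle>0$ forcing a strictly positive inner product between the two halves of the coincidence vector. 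By contrast, your reduction of Item~\ref{thm:geom_itm:2} via $\xi_\R\leq 2\,\xi_\C$ (real-and-imaginary-parts embedding, whose real inner product is the real part of the Hermitian one) is sound and is a legitimate alternative to the paper's direct argument with a complex Borsuk--Ulam variant, but it does not repair the two gaps above.
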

\noindent
The theorem is used to prove that the orthogonality dimension of the complement of the Schrijver graph $S(d,s)$ over the reals is $d-2s+2$ (see Corollary~\ref{cor:xiSchrijver}).
The proof technique of Theorem~\ref{thm:geo} is also used to prove a lower bound on the orthogonality dimension over the reals of the complement of the Borsuk graph defined by Erd{\H{o}}s and Hajnal~\cite{ErdosH67} (see Section~\ref{sec:Borsuk}).

\subsection{Applications}

We describe below applications of our results to information theory and to quantum communication complexity.

\subsubsection{Shannon Capacity}\label{sec:Shannon}

The strong product $G_1 \cdot G_2$ of two graphs $G_1=(V_1,E_1)$ and $G_2=(V_2,E_2)$ is defined as the graph whose vertex set is $V_1 \times V_2$ where two distinct vertices $(u_1,u_2)$ and $(v_1,v_2)$ are adjacent if for every $i \in \{1,2\}$ the vertices $u_i$ and $v_i$ are either equal or adjacent in $G_i$. The $k$-th power of a graph $G$, denoted by $G^k$, is defined as the product of $k$ copies of $G$. The Shannon capacity of a graph $G$, introduced in 1956 by Shannon~\cite{Shannon56}, is the limit $c(G) = \lim_{k \rightarrow \infty}{(\alpha(G^k))^{1/k}}$ whose existence follows from super-multiplicativity and Fekete's lemma. This graph parameter is motivated by a question in information theory on the capacity of noisy channels.
Indeed, if $G$ is the graph whose vertices are the symbols that a channel can transmit, and two symbols are adjacent if they may be confused in the transmission, then $c(G)$ can be intuitively interpreted as the effective alphabet size of the channel.
The Shannon capacity parameter of graphs is very far from being well understood. It is not known if the problem of deciding whether the Shannon capacity of an input graph exceeds a given value is decidable, and the exact Shannon capacity is unknown even for small and fixed graphs, e.g., the cycle on $7$ vertices.

Several upper bounds on the Shannon capacity of graphs were presented in the literature over the years. It is easy to see that $c(G) \leq \chi(\overline{G})$, and Shannon showed already in~\cite{Shannon56} the stronger bound $c(G) \leq \chi_f(\overline{G})$, where $\chi_f$ stands for the fractional chromatic number.
A useful way to obtain an upper bound on the Shannon capacity is to come up with a real-valued non-negative sub-multiplicative function on graphs that forms an upper bound on the independence number, that is, a function $f$ satisfying $f(G_1 \cdot G_2) \leq f(G_1) \cdot f(G_2)$ for every two graphs $G_1,G_2$ and $\alpha(G) \leq f(G)$ for every graph $G$. Indeed, for such an $f$ we have
\[ c(G) = \lim_{k \rightarrow \infty}{(\alpha(G^k))^{1/k}} \leq \lim_{k \rightarrow \infty}{(f(G^k))^{1/k}} \leq \lim_{k \rightarrow \infty}{(f(G)^k)^{1/k}} = f(G).\]
For example, it is not difficult to verify that the orthogonality dimension $\xi_\Fset$ over any field $\Fset$ is a sub-multiplicative upper bound on the independence number, hence $c(G) \leq \xi_\Fset(G)$ for every graph $G$. Other upper bounds on the Shannon capacity obtained in this way are the $\vartheta$-function due to Lov\'asz~\cite{Lovasz79}, the ${\minrank}_\Fset$ parameter due to Haemers~\cite{Haemers81}, and the minimum dimension of polynomial representations due to Alon~\cite{AlonUnion98}.

In a recent work, Hu, Tamo, and Shayevitz~\cite{HuTS17} defined for every function $f$ as above a fractional linear programming variant $f^*$. For a graph $G$ on the vertex set $V$, $f^*(G)$ is the value of the following linear program.
\begin{align}
  \label{Primal}
  \begin{split}
 \textup{maximize } &\sum_{x \in V} w(x) \\
 \textup{subject to }  & \sum_{x\in S}w(x) \le f(G[S]) \textup{~~~ for each set $S \subseteq V$},\\
                & w(x)\ge 0 \textup{~~~ for each vertex $x \in V$},
  \end{split}
\end{align}
where $G[S]$ stands for the subgraph of $G$ induced by $S$.
It was proved in~\cite{HuTS17} that if $f$ is a sub-multiplicative upper bound on the independence number then so is $f^*$, hence $f^*$ also forms an upper bound on the Shannon capacity.
Moreover, the upper bound $f^*$ is at least as strong as $f$, that is, $c(G) \leq f^*(G) \leq f(G)$ for every graph $G$ (see Section~\ref{sec:Shannon4}).
It was shown in~\cite{HuTS17} that the Lov\'asz $\vartheta$-function satisfies $\vartheta(G)=\vartheta^*(G)$ for every graph $G$, whereas for other upper bounds $f$ on the Shannon capacity one can have $f^*(G) < f(G)$. For example, for every odd integer $n \geq 5$ the cycle $C_n$ satisfies ${\minrank}^*_\R(C_n) = \frac{n}{2} < \frac{n+1}{2} = {\minrank}_\R(C_n)$.

In this work, we aim to study the integrality gap of the fractional quantities $f^*$ as a function of the number of vertices. Namely, we would like to estimate the largest possible ratio $f(G)/f^*(G)$ over all $n$-vertex graphs $G$.
We start with a general upper bound.
A function $f$ on graphs is said to be sub-additive if for every graph $G$ on the vertex set $V$ and every sets $S_1$ and $S_2$ such that $V = S_1 \cup S_2$, $f(G) \leq f(G[S_1]) + f(G[S_2])$.
The following theorem shows that for sub-additive functions $f$ the ratio between $f$ and $f^*$ is at most logarithmic in the number of vertices.

\begin{theorem}\label{thm:fractional_upper}
For every sub-additive function $f$ and every $n$-vertex graph $G$,
\[f(G) \leq O(\log n) \cdot f^*(G).\]
\end{theorem}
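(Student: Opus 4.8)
The plan is to recognize the bound as a statement on the integrality gap of the weighted set cover linear program, and to prove it via the greedy set cover algorithm combined with a dual fitting argument. As in the paper's setting, I assume throughout that $f$ is real-valued and non-negative, so that $c_S := f(G[S])$ is a finite non-negative weight for every $S \subseteq V$ and the program defining $f^*(G)$ is feasible and bounded; write $n = |V|$ and $H_n = \sum_{j=1}^{n} \tfrac1j = O(\log n)$.

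First I would isolate the only place where sub-additivity enters: by induction on the number of sets, if $\mathcal{C}$ is any family of subsets of $V$ with $\bigcup_{S \in \mathcal{C}} S = V$, then $f(G) \le \sum_{S \in \mathcal{C}} c_S$. The base case $|\mathcal{C}| = 2$ is the definition of sub-additivity, and for the inductive step one replaces two members of $\mathcal{C}$ by their union $S_1 \cup S_2$ and applies sub-additivity inside the induced subgraph $G[S_1 \cup S_2]$. In other words, every integral cover of $V$ already accounts for $f(G)$; the content of the theorem is that a cheap fractional cover cannot do much better.

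Next, run the greedy weighted set cover algorithm on the ground set $V$ with all subsets $S \subseteq V$ available at cost $c_S$: while the set $U$ of currently uncovered vertices is nonempty, pick a set $S$ minimizing $c_S / |S \cap U|$, add it to the output family $\mathcal{C}$, and for each newly covered vertex $x \in S \cap U$ set its price to $p_x := c_S / |S \cap U|$. Since every singleton is available at finite cost, each iteration covers at least one new vertex, so the process ends after at most $n$ steps with a cover $\mathcal{C}$ of $V$ satisfying $\sum_{S \in \mathcal{C}} c_S = \sum_{x \in V} p_x$. The crucial estimate is the dual fitting bound $\sum_{x \in S} p_x \le H_{|S|}\, c_S$ for every $S \subseteq V$: ordering the vertices of $S$ by the time they become covered, when the $k$-th vertex from the end is covered there are still at least $k$ vertices of $S$ uncovered, so the minimality of the chosen greedy set gives that this vertex is priced at most $c_S/k$; summing over $k$ gives the bound. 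Consequently $w := p/H_n$ is feasible for the program defining $f^*(G)$ — it is non-negative and, since $H_{|S|} \le H_n$, satisfies $\sum_{x \in S} w(x) \le c_S = f(G[S])$ for every $S \subseteq V$ — and therefore
\[ f^*(G) \;\ge\; \sum_{x \in V} w(x) \;=\; \frac{1}{H_n}\sum_{x \in V} p_x \;=\; \frac{1}{H_n}\sum_{S \in \mathcal{C}} c_S \;\ge\; \frac{f(G)}{H_n}, \]
where the last inequality applies the sub-additivity consequence of the previous paragraph to the cover $\mathcal{C}$. Rearranging yields $f(G) \le H_n \cdot f^*(G) = O(\log n)\cdot f^*(G)$.

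The only genuinely new ingredient is the translation: the constraints of the program defining $f^*$ are precisely the feasibility constraints of the packing dual of weighted set cover (one per subset $S$, with right-hand side $f(G[S])$), and sub-additivity is exactly the hypothesis that forces every integral cover to cost at least $f(G)$. I expect the part requiring the most care is verifying the dual fitting inequality for all subsets simultaneously — not merely for the sets selected by greedy — since that is what makes $w = p/H_n$ a legitimate feasible point; the remaining steps are standard. Minor degenerate cases ($V = \emptyset$, or $n$ so small that $H_n$ is a constant) should be noted separately, with the bound understood as $O(\log n)$ up to an additive constant.
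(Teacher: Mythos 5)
Your proof is correct, but it takes a genuinely different route from the paper. The paper works with the covering (dual) formulation of $f^*$: it takes an optimal fractional cover $q$, samples $t=\lceil Q\ln(3n)\rceil$ sets independently from the distribution $q/Q$, and shows via a union bound and Markov's inequality that with positive probability the sampled sets cover $V$ and have total cost at most $6\ln(3n)\cdot f^*(G)$; sub-additivity (extended to covers, used there without the explicit induction you spell out) then gives $f(G)\le 6\ln(3n)\cdot f^*(G)$. You instead run the deterministic greedy weighted set cover algorithm and use dual fitting: the prices $p_x$ scaled by $H_n$ form a feasible point of the maximization program \eqref{Primal} defining $f^*$, so $f^*(G)\ge \frac{1}{H_n}\sum_{S\in\mathcal{C}}f(G[S])\ge f(G)/H_n$. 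Your version buys three things: it is deterministic and constructive, it gives the slightly sharper constant $H_n\approx \ln n+O(1)$ rather than $6\ln(3n)$, and it works directly with the primal form \eqref{Primal}, so it does not even need the LP duality step that the paper invokes to pass to \eqref{Dual}. The paper's randomized-rounding argument is a bit shorter once the dual form is in hand and requires no analysis of an algorithm. Both proofs hinge on the same two ingredients — the extension of sub-additivity to arbitrary finite covers (your induction is the careful version of what the paper asserts in one line) and the $O(\log n)$ integrality gap of set cover — and your degenerate-case caveats ($n$ small, zero costs, only sets meeting the uncovered region being eligible for the greedy choice) are handled correctly.
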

\noindent
It is easy to verify that all the aforementioned upper bounds on the Shannon capacity are sub-additive, hence their fractional variants cannot improve the bound on the Shannon capacity by more than a logarithmic multiplicative term.

As an application of our results on the Kneser graph, we obtain a matching lower bound on the integrality gap of the fractional orthogonality dimension over the real and complex fields.

\begin{theorem}\label{thm:IntroIntegrality}
For every fixed $\eps >0$, there exists an explicit family of $n$-vertex graphs $G$ such that
\[\xi_\C^*(G) \leq \xi_\R^*(G) \leq 2+\eps  \mbox{~~whereas~~} \xi_\R (G)= \Theta(\log n) \mbox{~~and~~} \xi_\C (G)= \Theta(\log n).\]
\end{theorem}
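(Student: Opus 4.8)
The plan is to combine the Kneser-graph corollary of Theorem~\ref{thm:cd} with the upper-bound estimate of Theorem~\ref{thm:fractional_upper}, applied to a carefully chosen slice of Kneser graphs. The natural candidate is $G = K(d,s)$ with $s$ growing and $d = 2s + r$ for a small parameter $r$; here $\xi_\R(\overline{G}) = d - 2s + 2 = r + 2$ while $\overline{K(d,s)}$ has $n = \binom{d}{s}$ vertices. This is backwards — we want $\xi$ to be $\Theta(\log n)$, not constant — so instead I would work with the complement directly: take $H_k = \overline{K(d,s)}$ where $d$ and $s$ are chosen so that $n = \binom{d}{s}$ grows while $d - 2s + 2 = \Theta(\log n)$. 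For instance, with $s$ fixed and $d \to \infty$ one gets $n = \Theta(d^s)$ and $\xi_\R(H) = d - 2s + 2 = \Theta(d) = \Theta(n^{1/s})$, which is polynomial, not logarithmic. The right regime is $d = 2s + \Theta(\log n)$ with $s = \Theta(\log n)$ as well, so that $\binom{2s + r}{s}$ with $r = \Theta(\log n)$, $s = \Theta(\log n)$ gives $n$ subexponential in $\log n$; one must solve $\log \binom{2s+r}{s} \asymp \log n$ for the pair $(r,s)$ with $r = \Theta(\log n)$. Concretely, fixing a ratio and letting both $r,s \to \infty$ linearly together, $\log\binom{2s+r}{s} = \Theta(s) = \Theta(\log n)$, so $n$ is exponential in $s$ — hence $s = \Theta(\log n)$ and $r = \Theta(\log n)$, giving $\xi_\R(\overline{K(d,s)}) = r + 2 = \Theta(\log n)$, as desired.

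The three claims are then: (i) $\xi_\R(G) = \Theta(\log n)$, (ii) $\xi_\C(G) = \Theta(\log n)$, and (iii) $\xi_\R^*(G) \le 2 + \eps$ (which immediately gives $\xi_\C^*(G) \le \xi_\R^*(G)$ since $\xi_\C \le \xi_\R$ for every graph and the same inequality is inherited by the LP value). For (i): the upper bound $\xi_\R(\overline{K(d,s)}) \le \chi(K(d,s)) = d - 2s + 2 = r+2 = O(\log n)$ is Kneser's coloring; the matching lower bound $\xi_\R(\overline{K(d,s)}) \ge \cd_2(\{s\text{-subsets of }[d]\}) = d - 2s + 2$ is exactly Corollary~\ref{cor:xiKneser}, which Theorem~\ref{thm:cd}\eqref{thm:cd_itm:1} supplies. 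For (ii): the lower bound is Theorem~\ref{thm:cd}\eqref{thm:cd_itm:2}, giving $\xi_\C(\overline{K(d,s)}) \ge \cd_2/2 = (d-2s+2)/2 = \Omega(\log n)$, and the upper bound $\xi_\C \le \xi_\R = O(\log n)$ finishes it.

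The heart of the matter is (iii), the near-optimal fractional value. Here I would exhibit a feasible solution to the LP~\eqref{Primal} for $f = \xi_\R$ on $G = \overline{K(d,s)}$: put the uniform weight $w(A) = c$ on every $s$-subset $A$ of $[d]$, for a constant $c$ to be tuned. The constraint for a vertex subset $S$ reads $c \cdot |S| \le \xi_\R(G[S])$. The key structural fact is that for any collection $S$ of $s$-subsets of $[d]$, the induced subgraph $G[S] = \overline{K(\calF_S)}$ where $\calF_S \subseteq S$, and one has a clean bound $\xi_\R(\overline{K(\calF_S)}) \ge \alpha(\overline{K(\calF_S)})$, i.e.\ at least the size of the largest clique of $K(\calF_S)$, which is the largest subfamily of pairwise-disjoint $s$-subsets, at most $\lfloor d/s \rfloor$. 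That is too weak in general; instead the useful bound is that $G[S]$ contains a large independent set in $G$, i.e.\ a large intersecting family inside $S$ — but by the Hilton–Milner / Erd\H{o}s–Ko–Rado philosophy, if $|S|$ is more than a $(1/(d-2s+2))$-fraction of all $s$-subsets then $G[S]$ must contain many pairwise-disjoint sets, forcing $\xi_\R(G[S])$ up. Making this precise: one shows that whenever $|S| \ge \tfrac{1}{c}\,|S|$ pairs of... — more directly, the right tool is that $\xi_\R$ is sub-additive (as noted after Theorem~\ref{thm:fractional_upper}) together with a \emph{fractional cover} argument: $K(d,s)$ has fractional chromatic number exactly $\binom{d}{s}/\binom{d-1}{s-1} = d/s$, so its complement satisfies $\alpha^*(\overline{K(d,s)}) = d/s$, and uniform weights $w(A) = s/d$ are feasible for the \emph{independence-number} LP; since $\xi_\R \ge \alpha$, the same uniform weights with total $\binom{d}{s}\cdot s/d$ give $\xi_\R^*(G) \ge d/s \to 2$ as $s/d \to 1/2$. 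For the matching upper bound $\xi_\R^*(G) \le 2 + \eps$, I would use that $\xi_\R^*$ is sandwiched between $\alpha^*$ and $\chi_f$-type quantities and invoke that for $\overline{K(d,s)}$, $\xi_\R(G[S]) \le \chi(\text{Kneser of }\calF_S) \le$ (number of parts)~$ = d - 2s + 2$ only on the full family — the induced subfamilies are handled by the vertex-critical / self-similar structure so that $\xi_\R(G[S])/|S|$ stays near $s/d$.

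The main obstacle I anticipate is exactly this last point: proving that the LP value $\xi_\R^*(\overline{K(d,s)})$ is \emph{as small as} $2 + \eps$, not merely that it is $\ge 2$. One needs an upper bound on $\xi_\R^*$, i.e.\ a feasible dual solution (a fractional covering of $V$ by induced subgraphs $S$ with $\xi_\R(G[S])$ small relative to $|S|$). The clean way is: cover the $\binom{d}{s}$ vertices of $\overline{K(d,s)}$ by induced copies of $\overline{K(d',s')}$ with a good ratio, or observe that $\xi_\R^*(G) \le \chi_f(\overline{G})^{?}$... — the cleanest route, and the one I would bet the proof actually takes, is to note $\xi_\R^*(\overline{K(d,s)}) \le \vartheta^*(\overline{K(d,s)}) = \vartheta(\overline{K(d,s)}) = d/s$ (using $\xi_\R \ge \vartheta$ fails in the wrong direction, so really it's via $\alpha \le \xi_\R \le \vartheta'$ where $\vartheta'$ is Schrijver's variant, all sandwiching $\vartheta(\overline{K(d,s)}) = d/s$, and $\vartheta$ is its own fractional version), hence $\xi_\R^*(G) \le d/s = 2 + r/s \le 2 + \eps$ once $r/s \le \eps$. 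Combining $r = \Theta(\log n)$, $s = \Theta(\log n)$ with a ratio $r/s$ small lets us push $r/s$ below $\eps$ while keeping $r = r(\eps) \cdot$ something $= \Theta_\eps(\log n)$. So the three displayed conclusions follow, with the only genuinely delicate verification being the value $\vartheta(\overline{K(d,s)}) = d/s$ and the chain $\alpha^*\le \xi_\R^* \le \vartheta$ — both standard but needing to be assembled correctly.
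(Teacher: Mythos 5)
Your choice of graph family and the two non-fractional estimates are exactly right and match the paper: take $G=\overline{K(d,s)}$ with $d=(2+\eps)s$ and $s\to\infty$, so that $n=\binom{d}{s}=2^{\Theta(s)}$, the upper bound $\xi_\Fset(G)\le\chi(\overline{G})=d-2s+2=\Theta(\log n)$ comes from Kneser's coloring, and the matching lower bounds for $\xi_\R$ and $\xi_\C$ come from Corollary~\ref{cor:xiKneser} (i.e.\ Theorem~\ref{thm:cd}). The gap is in the one step that actually carries the theorem, the upper bound $\xi_\R^*(G)\le 2+\eps$. The route you finally commit to -- $\xi_\R^*\le\vartheta^*=\vartheta$, patched by the claim $\alpha\le\xi_\R\le\vartheta'$ with Schrijver's variant -- rests on a false inequality: neither $\xi_\R\le\vartheta$ nor $\xi_\R\le\vartheta'$ holds in general, and for the very graphs at hand the failure is maximal, since $\xi_\R(\overline{K(d,s)})=d-2s+2=\Theta(\log n)$ while $\vartheta(\overline{K(d,s)})=d/s=2+\eps$ (this discrepancy is precisely what makes the theorem interesting). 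Pointwise domination also goes the wrong way for fractional versions: since $\vartheta\le\xi_\C\le\xi_\R$ on every induced subgraph, one only gets $\vartheta=\vartheta^*\le\xi_\R^*$, a lower bound. Your uniform-weight computation likewise only bounds $\xi_\R^*$ from below, which the statement does not require.

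The missing idea is much simpler and is the paper's Claim~\ref{claim:f*chi_f}: in the dual LP~\eqref{Dual} for $\xi_\R^*$, restrict the support of $q$ to sets $S$ that are cliques of $G$ (independent sets of $\overline{G}=K(d,s)$, i.e.\ intersecting families); on such $S$ the induced subgraph is complete, so $\xi_\R(G[S])=1$, and an optimal fractional clique cover gives $\xi_\R^*(G)\le\chi_f(\overline{G})$. Then vertex-transitivity of $K(d,s)$ together with Erd\H{o}s--Ko--Rado gives $\chi_f(K(d,s))=\binom{d}{s}/\binom{d-1}{s-1}=d/s=2+\eps$, and $\xi_\C^*\le\xi_\R^*$ follows as you note from $\xi_\C\le\xi_\R$ on induced subgraphs. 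You circle near this (you quote $\chi_f(K(d,s))=d/s$ and speak of fractional covers), but you never assemble the dual-feasibility argument, and the vague appeal to ``vertex-critical / self-similar structure'' of induced subfamilies does not substitute for it; as written, the fractional upper bound is unproved.
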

\noindent
We also show an unbounded integrality gap for the fractional minrank parameter over various fields (see Theorem~\ref{thm:AllBounds}).

\subsubsection{Quantum Communication Complexity}
In the standard model of communication complexity, two parties Alice and Bob get inputs $x,y$ from two sets $\calX,\calY$ respectively, and they have to compute by a communication protocol the value of $g(x,y)$ for a two-variable function $g$.
In a promise communication problem, the inputs are guaranteed to be drawn from a subset of $\calX \times \calY$ known to the parties in advance. In a one-round protocol, the communication flows only from Alice to Bob.
The classical, respectively quantum, communication complexity of a problem is the minimal number of bits, respectively qubits, that the parties have to exchange on worst-case inputs in a communication protocol for the problem.

The orthogonality dimension of graphs over the complex field plays a central role in the study of the quantum communication complexity of promise equality problems.\footnote{Note that the orthogonality dimension parameter (also known as orthogonality rank) is sometimes defined in the quantum communication complexity literature as the orthogonality dimension of the complement graph, namely, the definition requires vectors associated with {\em adjacent} vertices to be orthogonal. In this paper we have decided to follow the definition commonly used in the information theory literature.}
In such problems, Alice and Bob get either equal or adjacent vertices of a graph $G$ and their goal is to decide whether their inputs are equal. De Wolf~\cite[Section~8.5]{deWolfThesis} showed that the classical one-round communication complexity of the promise equality problem associated with a graph $G$ is $\lceil \log_2 \chi(G) \rceil$, and that its quantum one-round communication complexity is $\lceil \log_2 \xi_\C(\overline{G}) \rceil$. Bri{\"{e}}t et al.~\cite{BrietBLPS15} proved that any classical protocol for such a problem can always be reduced to a classical one-round protocol with no extra communication, while in the quantum setting the one-round and two-round communication complexities of a promise equality problem can have an exponential gap.
This separation was obtained using the Lov\'asz $\vartheta$-function and the relation $\xi_\C(G) \geq \vartheta(G)$ (see~\cite[Lemma~2.5]{BrietBLPS15}).

For a set system $\calF$ with $\emptyset \notin \calF$, consider the promise equality problem in which Alice and Bob get either equal or disjoint sets from $\calF$, and their goal is to decide whether their inputs are equal. Observe that the graph associated with this problem is the generalized Kneser graph $K(\calF)$, hence its quantum one-round communication complexity is precisely $\lceil \log_2 \xi_\C(\overline{K(\calF)}) \rceil$. Our bounds on the orthogonality dimension of such graphs over $\C$ have applications to the quantum one-round communication complexity of promise equality problems, as demonstrated below.

For integers $d \geq 2s$, consider the communication complexity problem in which Alice and Bob get two $s$-subsets of $[d]$, their sets are guaranteed to be either equal or disjoint, and their goal is to decide whether the sets are equal. The graph associated with this problem is the Kneser graph $K(d,s)$, so its classical communication complexity is $\lceil \log_2 \chi(K(d,s)) \rceil = \lceil \log_2 (d-2s+2) \rceil$. As an application of Theorem~\ref{thm:cd}, we get that
$\lceil \log_2 \xi_\C(\overline{K(d,s)}) \rceil \geq \lceil \log_2 (d-2s+2)-1 \rceil$ (see Corollary~\ref{cor:xiKneser}),
yielding the precise quantum one-round communication complexity of the problem up to an additive $1$.
We note that the lower bound on $\xi_\C(\overline{K(d,s)})$ obtained from the Lov\'asz $\vartheta$-function would not suffice here, since $\vartheta(\overline{K(d,s)}) = \frac{d}{s}$ (see~\cite{Lovasz79}).

The orthogonality dimension over the complex field was also used by Bri{\"{e}}t et al.~\cite{BrietBLPS15} to characterize the quantum one-round communication complexity of a family of problems called list problems, originally studied by Witsenhausen~\cite{Witsenhausen76}. In the list problem that corresponds to the Kneser graph $K(d,s)$, Alice gets an $s$-subset $A$ of $[d]$, Bob gets a list of pairwise disjoint $s$-subsets of $[d]$ that includes the set $A$, and his goal is to discover $A$. It follows from~\cite{BrietBLPS15} that the quantum one-round communication complexity of this problem is equal to the quantity $\lceil \log_2 \xi_\C(\overline{K(d,s)}) \rceil$ determined above.

\subsection{Outline}
The rest of the paper is organized as follows.
In Section~\ref{sec:preliminaries} we provide some background on the Borsuk-Ulam theorem and on the minrank parameter of graphs.
In Section~\ref{sec:bounds} we prove Theorems~\ref{thm:cd} and~\ref{thm:geo} and obtain our results on the Kneser, Schrijver, and Borsuk graphs.
Finally, in Section~\ref{sec:Shannon4} we study the integrality gap of fractional upper bounds on the Shannon capacity and prove Theorems~\ref{thm:fractional_upper} and~\ref{thm:IntroIntegrality}.

\section{Preliminaries}\label{sec:preliminaries}

\subsection{The Borsuk-Ulam Theorem}\label{sec:BU}

For an integer $t \geq 0$, let $\S^{t} = \{ x \in \R^{t+1} \mid \|x\|=1 \}$ denote the $t$-dimensional unit Euclidean sphere.
For a point $x \in \S^t$, let $H(x) = \{ z \in \S^t \mid \langle x,z \rangle >0 \}$ denote the open hemisphere of $\S^t$ centered at $x$.
We state below the Borsuk-Ulam theorem, proved by Borsuk in 1933~\cite{Borsuk33}.

\begin{theorem}[Borsuk-Ulam Theorem]\label{thm:BU}
For every continuous function $f: \S^t \rightarrow \R^t$ there exists $x \in \S^t$ such that $f(x)=f(-x)$.
\end{theorem}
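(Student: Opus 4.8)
This is the classical Borsuk--Ulam theorem, and the plan is to recover it from the equivalent statement that there is no continuous \emph{antipodal} map $g\colon\S^t\to\S^{t-1}$ (meaning $g(-x)=-g(x)$ for all $x$). First I would set up this reduction: assuming toward a contradiction that a continuous $f\colon\S^t\to\R^t$ satisfies $f(x)\neq f(-x)$ for every $x\in\S^t$, the map
\[
g(x)=\frac{f(x)-f(-x)}{\norm{f(x)-f(-x)}}
\]
is well defined, continuous, lands in $\S^{t-1}$, and is antipodal, so it is exactly an object of the forbidden kind.

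Next I would rule out a continuous antipodal $g\colon\S^t\to\S^{t-1}$ using the degree-theoretic input that every continuous antipodal self-map of a sphere has odd, hence nonzero, degree. Restricting such a $g$ to an equatorial copy of $\S^{t-1}$ inside $\S^t$ yields a continuous antipodal self-map of $\S^{t-1}$, which therefore has nonzero degree and so is not null-homotopic. But the closed hemisphere of $\S^t$ that this equator bounds is homeomorphic to a ball, and the restriction of $g$ to that hemisphere is a continuous extension of $g|_{\S^{t-1}}$ over the ball; such an extension forces $g|_{\S^{t-1}}$ to be null-homotopic, a contradiction. (For $t=1$ one argues directly: a continuous map from the connected set $\S^1$ to $\S^0$ is constant and hence cannot be antipodal.)

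The step I expect to be the real obstacle is the single genuinely topological ingredient used above --- that antipodal self-maps of $\S^n$ have odd degree --- which is the true core of Borsuk--Ulam and is not a formality. I would either cite it or prove it, either through the usual route in algebraic topology (induction on $n$, passing to the quotient $\R P^n$ and computing in $\Z/2$-homology) or, more in keeping with the combinatorial flavour of this paper, by deducing Borsuk--Ulam directly from Tucker's lemma: take a fine, antipodally symmetric triangulation, label each vertex by the signed index of a largest-magnitude coordinate of its image under the putative $g$, observe that fineness rules out ``complementary'' edges, and invoke Tucker's lemma for the contradiction; see Matou{\v{s}}ek's book~\cite{MatousekBook}. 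Since Theorem~\ref{thm:BU} is used only as a black box in what follows, any of these standard arguments suffices for our purposes.
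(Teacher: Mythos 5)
The paper does not actually prove Theorem~\ref{thm:BU} at all: it is the classical Borsuk--Ulam theorem, quoted as a black box with references to Borsuk's original paper and to Matou{\v{s}}ek's book, so there is no internal argument for you to match. Your proposal is a correct standard proof outline. The reduction is right: if $f(x)\neq f(-x)$ everywhere, then $g(x)=\bigl(f(x)-f(-x)\bigr)/\norm{f(x)-f(-x)}$ is a continuous antipodal map $\S^t\to\S^{t-1}$, and the hemisphere-extension argument correctly shows such a map cannot exist once one knows that antipodal self-maps of $\S^{t-1}$ have nonzero (odd) degree; your separate treatment of $t=1$ via connectedness of $\S^1$ versus $\S^0$ is the right way to handle the degenerate case. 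The honest caveat you flag is the real point: all of the topological content is concentrated in the odd-degree theorem (equivalently, Tucker's lemma), so your argument is a reduction of Borsuk--Ulam to an equivalent classical statement rather than a from-scratch proof. Citing that ingredient, or proving it via $\Z/2$-homology of projective space or via Tucker's lemma as you indicate, is entirely legitimate here --- indeed it parallels how the paper itself uses Theorem~\ref{thm:BU} purely as an imported tool, and nothing downstream in the paper depends on which proof of it one adopts.
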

\noindent
Equivalently, if a continuous function $f: \S^t \rightarrow \R^{t'}$ satisfies $f(x) \neq f(-x)$ for all $x \in \S^t$ then $t' > t$.
For several other equivalent versions of Theorem ~\ref{thm:BU}, see~\cite[Section~2.1]{MatousekBook}.

We also need a variant of the Borsuk-Ulam theorem for complex-valued functions.
We start with some notations.
For a complex number $z \in \C$ we denote by $\Real(z)$ and $\Image(z)$ the real and imaginary parts of $z$ respectively, hence $z = \Real(z)+\Image(z) \cdot i$.
For an integer $t \geq 1$, let $\phi_t:\C^t \rightarrow \R^{2t}$ be the natural embedding of $\C^t$ in $\R^{2t}$ defined by
\begin{eqnarray}\label{eq:phi}
\phi_t(x) = (\Real(x_1),\Image(x_1),\ldots,\Real(x_t),\Image(x_t)) \in \R^{2t}.
\end{eqnarray}
Clearly, $\phi_t$ is a bijection from $\C^t$ to $\R^{2t}$.
Notice that we have $\phi_t(-x)=-\phi_t(x)$ for every $x \in \C^t$.
Our variant of the Borsuk-Ulam theorem for complex-valued functions is given below.

\begin{theorem}[Borsuk-Ulam Theorem for complex-valued functions]\label{thm:BU_complex}
For every continuous function $f: \S^{2t} \rightarrow \C^t$ there exists $x \in \S^{2t}$ such that $f(x)=f(-x)$.
\end{theorem}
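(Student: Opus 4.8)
The plan is to reduce Theorem~\ref{thm:BU_complex} to the real Borsuk--Ulam theorem (Theorem~\ref{thm:BU}) by transporting the target space $\C^t$ to $\R^{2t}$ along the embedding $\phi_t$ defined in~\eqref{eq:phi}. First I would observe that $\phi_t:\C^t\to\R^{2t}$ is a linear isomorphism of real vector spaces; in particular it is continuous, injective, and satisfies $\phi_t(-x)=-\phi_t(x)$ for every $x\in\C^t$. Hence, given a continuous $f:\S^{2t}\to\C^t$, the composition $g:=\phi_t\circ f:\S^{2t}\to\R^{2t}$ is continuous.

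Next I would apply Theorem~\ref{thm:BU} with the dimension parameter taken to be $2t$ (so the sphere is $\S^{2t}$ and the Euclidean target is $\R^{2t}$): there exists a point $x\in\S^{2t}$ with $g(x)=g(-x)$, i.e. $\phi_t(f(x))=\phi_t(f(-x))$. Since $\phi_t$ is injective, this yields $f(x)=f(-x)$, which is exactly the claim. The equivalent reformulation (if $f:\S^{2t}\to\C^{t'}$ satisfies $f(x)\neq f(-x)$ for all $x$ then $2t'>2t$, i.e. $t'>t$) follows by the same argument applied to $\phi_{t'}\circ f:\S^{2t}\to\R^{2t'}$ together with the contrapositive form of Theorem~\ref{thm:BU}.

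I do not expect any genuine obstacle here: the only point to get right is the bookkeeping of dimensions (a map into $\C^t$ corresponds to a map into $\R^{2t}$, so one must start from the sphere $\S^{2t}$, not $\S^{t}$), and the elementary fact that the real-linear identification $\phi_t$ is a homeomorphism that commutes with the antipodal map. Everything else is a direct invocation of the classical statement.
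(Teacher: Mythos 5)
Your proposal is correct and coincides with the paper's own argument: both compose $f$ with the real-linear identification $\phi_t$ to get a continuous map $\S^{2t}\to\R^{2t}$, apply Theorem~\ref{thm:BU}, and use injectivity of $\phi_t$ to pull the coincidence point back. No gaps; the dimension bookkeeping is handled exactly as in the paper.
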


\begin{proof}
For a continuous function $f: \S^{2t} \rightarrow \C^t$ consider the function $\widetilde{f}: \S^{2t} \rightarrow \R^{2t}$ defined by the composition $\widetilde{f} = \phi_t \circ f$. Applying Theorem~\ref{thm:BU} to $\widetilde{f}$, we get that there exists $x \in \S^{2t}$ such that $\widetilde{f}(x)=\widetilde{f}(-x)$. By the invertibility of $\phi_t$, this implies that $f(x) = f(-x)$, as required.
\end{proof}

\subsection{Minrank}\label{sec:minrank}

The minrank parameter of graphs, introduced by Haemers in~\cite{Haemers81}, is defined as follows.
\begin{definition}\label{def:minrank}
Let $G$ be a graph on the vertex set $V = [n]$ and let $\Fset$ be a field.
We say that an $n \times n$ matrix $M$ over $\Fset$ {\em represents} $G$ if $M_{i,i} \neq 0$ for every $i \in V$, and $M_{i,j}=0$ for every distinct non-adjacent vertices $i,j \in V$.
The {\em minrank} of $G$ over $\Fset$ is defined as
\[{\minrank}_\Fset(G) =  \min\{{\rank}_{\Fset}(M)\mid M \mbox{ represents }G\mbox{ over }\Fset\}.\]
\end{definition}

The minrank parameter can be equivalently defined in terms of orthogonal bi-representations. A $t$-dimensional {\em orthogonal bi-representation} of a graph $G=(V,E)$ over a field $\Fset$ is an assignment of a pair $(u_i,v_i) \in \Fset^t \times \Fset^t$ to each vertex $i \in V$ such that $\langle u_i,v_i \rangle \neq 0$ for every $i \in V$, and  $\langle u_i,v_j \rangle = \langle u_j,v_i \rangle = 0$ for every distinct non-adjacent vertices $i$ and $j$ in $G$. It can be verified that ${\minrank}_\Fset(G)$ is the smallest integer $t$ for which there exists a $t$-dimensional orthogonal bi-representation of $G$ over $\Fset$ (see, e.g.,~\cite{Peeters96,ChlamtacH14}). Since orthogonal bi-representations generalize orthogonal representations, we clearly have ${\minrank}_\Fset(G) \leq \xi_\Fset(G)$ for all graphs $G$ and fields $\Fset$.

The minrank parameter is always bounded from above by the clique cover number.
The following lemma shows a lower bound on the minrank parameter over finite fields in terms of the clique cover number.
Its proof is implicit in~\cite{LangbergS08} and we give here a quick proof for completeness.

\begin{lemma}[\cite{LangbergS08}]\label{claim:minrankF}
For every graph $G$ and a finite field $\Fset$, ${\minrank}_\Fset(G) \geq \log_{|\Fset|}\chi(\overline{G})$.
\end{lemma}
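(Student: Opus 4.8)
The plan is to prove the contrapositive-style bound directly: take a matrix $M$ over $\Fset$ that represents $\overline{G}$ with $\rank_\Fset(M) = {\minrank}_\Fset(\overline{G})$... wait, I should be careful about which graph. Actually the lemma bounds ${\minrank}_\Fset(G)$ in terms of $\chi(\overline{G})$, so let $M$ be an $n\times n$ matrix over $\Fset$ representing $G$ with $r := \rank_\Fset(M) = {\minrank}_\Fset(G)$; I want to show $|\Fset|^r \geq \chi(\overline{G})$, i.e. that the vertex set $V=[n]$ can be covered by at most $|\Fset|^r$ cliques of $G$, equivalently that $\overline G$ is properly colored with at most $|\Fset|^r$ colors. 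Since $\rank_\Fset(M) = r$, write $M = U^{\mathsf T} W$ where $U = (u_1,\ldots,u_n)$ and $W=(w_1,\ldots,w_n)$ have $r$ rows, so $M_{i,j} = \langle u_i, w_j\rangle$; this is just the orthogonal bi-representation viewpoint recalled in the excerpt. The condition $M_{i,i}\neq 0$ says $\langle u_i,u_i\rangle$-analogue $\langle u_i,w_i\rangle \neq 0$, and $M_{i,j}=0$ for distinct non-adjacent (in $G$) $i,j$.

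The key step is a coloring argument: color each vertex $i$ by the vector $u_i \in \Fset^r$ (the $i$-th column of $U$). There are at most $|\Fset|^r$ possible colors. I claim this is a proper coloring of $\overline G$, i.e. any two vertices $i\neq j$ that are non-adjacent in $G$ (hence adjacent in $\overline G$, or rather: I need that a color class is a clique in $G$) receive distinct colors. Suppose $u_i = u_j$ for some $i \neq j$ that are non-adjacent in $G$. Then $M_{j,i} = \langle u_j, w_i\rangle = \langle u_i, w_i\rangle = M_{i,i} \neq 0$. But $i,j$ distinct and non-adjacent in $G$ forces $M_{j,i}=0$, a contradiction. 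Hence every color class consists of vertices that are pairwise adjacent in $G$ — a clique in $G$ — so the color classes form a clique cover of $G$ of size at most $|\Fset|^r$, giving $\chi(\overline G) \leq |\Fset|^r$ and therefore $r \geq \log_{|\Fset|}\chi(\overline G)$, as desired.

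The main (and essentially only) obstacle is getting the factorization/bi-representation bookkeeping right and making sure the asymmetry of $M$ is handled correctly: $M$ need not be symmetric, so one must use the off-diagonal entry $M_{j,i}$ (not $M_{i,j}$) to derive the contradiction, and one must invoke the vanishing condition for the ordered pair of distinct non-adjacent vertices. There is also a trivial edge case — if $\overline G$ has no edges, i.e. $G$ is complete, then $\chi(\overline G) = 1$ and the inequality is vacuous since $r \geq 1$ whenever $n \geq 1$ (as $M$ has a nonzero diagonal entry, so $\rank \geq 1$); this needs at most a one-line remark. Everything else is a direct computation, so I would write it out in full rather than sketching.
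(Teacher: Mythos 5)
Your proof is correct and matches the paper's argument: the paper passes through the orthogonal bi-representation formulation (pairs $(u_i,v_i)$ with $\langle u_i,v_i\rangle\neq 0$ and $\langle u_j,v_i\rangle=0$ for distinct non-adjacent $i,j$), which is exactly what your rank factorization $M=U^{\mathsf T}W$ produces, and then colors vertex $i$ by $u_i\in\Fset^r$ with the same orthogonality contradiction. Your extra care about the asymmetry of $M$ (using $M_{j,i}$) is handled implicitly in the paper via the bi-representation definition, so there is no substantive difference.
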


\begin{proof}
Denote $t = {\minrank}_\Fset(G)$. Then there exists an assignment of a pair $(u_i,v_i) \in \Fset^t \times \Fset^t$ to each vertex $i \in V$ that forms an orthogonal bi-representation of $G$ over $\Fset$. Consider the coloring that assigns to every vertex $i \in V$ the color $u_i \in \Fset^t$. We claim that this is a proper coloring of $\overline{G}$. Indeed, for two vertices $i$ and $j$ adjacent in $\overline{G}$ we have $\langle u_i,v_i \rangle \neq 0$ and $\langle u_j,v_i \rangle = 0$, hence $u_i \neq u_j$. Since the number of used colors is at most $|\Fset|^t$, it follows that $\chi(\overline{G}) \leq |\Fset|^t$, as required.
\end{proof}

\section{Topological Bounds on the Orthogonality Dimension}\label{sec:bounds}

In this section we prove Theorems~\ref{thm:cd} and~\ref{thm:geo} and obtain our results on the Kneser, Schrijver, and Borsuk graphs.
The proofs employ the Borsuk-Ulam theorem given in Section~\ref{sec:BU}.

\subsection{Proof of Theorem~\ref{thm:cd}}

We start with the lower bound on the orthogonality dimension over the real field.

\begin{proof}[ of Theorem~\ref{thm:cd}, Item~\ref{thm:cd_itm:1}]
Let $\calF$ be a set system with ground set $[d]$ such that $\emptyset \notin \calF$, and put $t = \xi_\R(\overline{K(\calF)})$.
Then there exists an assignment of a nonzero vector $u_A \in \R^t$ to every set $A \in \calF$, such that $\langle u_A,u_B \rangle = 0$ for every disjoint sets $A,B \in \calF$. It can be assumed without loss of generality that the first nonzero coordinate in every vector $u_A$ is positive (otherwise replace $u_A$ by $-u_A$).

Let $y_1,\ldots,y_d \in \S^t$ be $d$ points in a general position, that is, no $t+1$ of them lie on a $(t-1)$-dimensional sphere. For a set $A \subseteq [d]$ denote $y_A = \{ y_i \mid i \in A \}$. Define a function $f:\S^t \rightarrow \R^t$ by
\[ f(x) = \sum_{A \in \calF}{u_A \cdot \prod_{j \in A}{\max(\langle x, y_j\rangle,0)}}.\]
Observe that for every $x \in \S^{t}$, $f(x)$ is a linear combination with positive coefficients of the vectors $u_A$ such that $y_A \subseteq H(x)$, where $H(x)$ is the open hemisphere of $\S^t$ centered at $x$.
The function $f$ is clearly continuous, hence by Theorem~\ref{thm:BU} there exists $x \in \S^t$ such that $f(x) = f(-x)$.
However, $f(x)$ is a linear combination of the vectors $u_A$ with $y_A \subseteq H(x)$ whereas $f(-x)$ is a linear combination of the vectors $u_A$ with $y_A \subseteq H(-x)$. Since $H(x) \cap H(-x) = \emptyset$, it follows that the sets $A$ involved in the linear combination of $f(x)$ are all disjoint from those involved in the linear combination of $f(-x)$.
The fact that $\langle u_A,u_B \rangle = 0$ for every disjoint sets $A,B \in \calF$ yields that the vectors $f(x)$ and $f(-x)$ are orthogonal, and by $f(x) = f(-x)$ they must be equal to the zero vector.

We claim now that there is no $A \in \calF$ with $y_A \subseteq H(x)$. To see this, assume in contradiction that $A_1, \ldots, A_m$ are the sets with this property ($m \geq 1$), and let $j \in [t]$ be the least coordinate in which at least one of the vectors $u_{A_1},\ldots,u_{A_m}$ is nonzero. Since $f(x)$ is a linear combination of these vectors with positive coefficients, using the assumption that the first nonzero coordinate of every $u_{A}$ is positive, it follows that the $j$th coordinate of $f(x)$ is positive in contradiction to $f(x)$ being the zero vector.
By the same reasoning, there is no $A \in \calF$ with $y_A \subseteq H(-x)$.

Finally, let $X$ denote the set of indices $i \in [d]$ for which $y_i$ does not belong to $H(x)$ nor to $H(-x)$. By the assumption of general position, $|X| \leq t$. We color the elements of $[d] \setminus X$ as follows: If $y_i \in H(x)$ then $i$ is colored red, and if $y_i \in H(-x)$ then $i$ is colored blue. Since no set $A \in \calF$ satisfies $y_A \subseteq H(x)$ or $y_A \subseteq H(-x)$, we get a proper $2$-coloring of the hypergraph $([d] \setminus X, \{ A \in \calF \mid A \cap X = \emptyset \})$. This implies that $\cd_2(\calF) \leq t$, as required.
\end{proof}

We next prove our lower bound on the orthogonality dimension over the complex field.
Its proof is similar to the proof over the reals but requires the Borsuk-Ulam theorem for complex-valued functions (Theorem~\ref{thm:BU_complex}).
Recall that $\phi_t$ stands for the natural embedding of $\C^t$ in $\R^{2t}$ given in~\eqref{eq:phi}.

\begin{proof}[ of Theorem~\ref{thm:cd}, Item~\ref{thm:cd_itm:2}]
Let $\calF$ be a set system with ground set $[d]$ such that $\emptyset \notin \calF$, and put $t = \xi_\C(\overline{K(\calF)})$.
Then there exists an assignment of a nonzero vector $u_A \in \C^t$ to every set $A \in \calF$, such that $\langle u_A,u_B \rangle = 0$ for every disjoint sets $A,B \in \calF$. It can be assumed without loss of generality that the first nonzero coordinate in every vector $\phi_t(u_A)$ is positive (otherwise replace $u_A$ by $-u_A$).

Let $y_1,\ldots,y_d \in \S^{2t}$ be $d$ points in a general position.
As before, for a set $A \subseteq [d]$ denote $y_A = \{ y_i \mid i \in A \}$. Define a function $f:\S^{2t} \rightarrow \C^t$ by
\[ f(x) = \sum_{A \in \calF}{u_A \cdot \prod_{j \in A}{\max(\langle x, y_j\rangle,0)}}.\]
Observe that for every $x \in \S^{2t}$, $f(x)$ is a linear combination with real positive coefficients of the vectors $u_A$ satisfying $y_A \subseteq H(x)$.
The function $f$ is clearly continuous, hence by Theorem~\ref{thm:BU_complex} there exists $x \in \S^{2t}$ such that $f(x) = f(-x)$.
However, $f(x)$ is a linear combination of the vectors $u_A$ with $y_A \subseteq H(x)$ whereas $f(-x)$ is a linear combination of the vectors $u_A$ with $y_A \subseteq H(-x)$. Since $H(x) \cap H(-x) = \emptyset$, it follows that the sets $A$ involved in the linear combination of $f(x)$ are all disjoint from those involved in the linear combination of $f(-x)$.
The fact that $\langle u_A,u_B \rangle = 0$ for every disjoint sets $A,B \in \calF$ yields that the vectors $f(x)$ and $f(-x)$ are orthogonal, and by $f(x) = f(-x)$ they must be equal to the zero vector.

We claim now that there is no $A \in \calF$ with $y_A \subseteq H(x)$. To see this, assume in contradiction that $A_1, \ldots, A_m$ are the sets with this property ($m \geq 1$), and let $j \in [2t]$ be the least coordinate in which at least one of the vectors $\phi_{t}(u_{A_1}),\ldots,\phi_{t}(u_{A_m})$ is nonzero. It follows that the $j$th coordinate of $\phi_t(f(x))$ is positive in contradiction to $f(x)$ being the zero vector.
By the same reasoning, there is no $A \in \calF$ with $y_A \subseteq H(-x)$.

Finally, let $X$ denote the set of indices $i \in [d]$ for which $y_i$ does not belong to $H(x)$ nor to $H(-x)$. By the assumption of general position, $|X| \leq 2t$. We color the elements of $[d] \setminus X$ as follows: If $y_i \in H(x)$ then $i$ is colored red, and if $y_i \in H(-x)$ then $i$ is colored blue. Since no set $A \in \calF$ satisfies $y_A \subseteq H(x)$ or $y_A \subseteq H(-x)$, we get a proper $2$-coloring of the hypergraph $([d] \setminus X, \{ A \in \calF \mid A \cap X = \emptyset \})$. This implies that $\cd_2(\calF) \leq 2t$, as required.
\end{proof}

Finally, we prove our lower bound on the minrank parameter over the real field (recall Definition~\ref{def:minrank}).
We start with the following lemma. Here, a real matrix is said to be non-negative if all of its entries are non-negative.

\begin{lemma}\label{lemma:minrank_nonneg}
Let $\calF$ be a set system such that $\emptyset \notin \calF$, and let $M$ be a real non-negative matrix that represents the graph $\overline{K(\calF)}$ over $\R$. Then, ${\rank}_\R(M) \geq \frac{1}{2} \cdot \cd_2(\calF)$.
\end{lemma}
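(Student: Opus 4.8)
The plan is to run the Borsuk--Ulam argument behind Item~\ref{thm:cd_itm:1} of Theorem~\ref{thm:cd}, but with the single orthogonal representation there replaced by a rank factorization of $M$, the non-negativity of $M$ taking over the role played by the sign normalization of the $u_A$'s. Write $r = \rank_\R(M)$. Since $M$ is non-negative and represents $\overline{K(\calF)}$, every diagonal entry satisfies $M_{A,A} > 0$. Fix a rank factorization of $M$: vectors $p_A, q_A \in \R^r$, one pair for each $A \in \calF$, with $\langle p_A, q_B\rangle = M_{A,B}$ for all $A, B \in \calF$. Then $\langle p_A, q_B\rangle \ge 0$ for all $A, B$, we have $\langle p_A, q_A\rangle = M_{A,A} > 0$, and $\langle p_A, q_B\rangle = 0$ whenever $A, B \in \calF$ are disjoint and distinct (these being non-adjacent vertices of $\overline{K(\calF)}$).

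Next I would fix points $y_1, \ldots, y_d \in \S^{2r}$ in general position (no $2r+1$ of them on a $(2r-1)$-dimensional sphere), set $c_A(x) = \prod_{j \in A}\max(\langle x, y_j\rangle, 0)$ for $A \subseteq [d]$ and $x \in \S^{2r}$, and define a continuous map $f \colon \S^{2r} \to \R^r \times \R^r = \R^{2r}$ by
\[ f(x) = \Bigl(\, \sum_{A \in \calF} c_A(x)\, p_A \,,\ \sum_{A \in \calF} c_A(x)\, q_A \,\Bigr). \]
By the Borsuk--Ulam theorem (Theorem~\ref{thm:BU}) there is $x \in \S^{2r}$ with $f(x) = f(-x)$. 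Put $\calF_+ = \{A \in \calF : y_A \subseteq H(x)\}$ and $\calF_- = \{A \in \calF : y_A \subseteq H(-x)\}$; since $\emptyset \notin \calF$ we have $c_A(x) > 0 \iff A \in \calF_+$ and $c_A(-x) > 0 \iff A \in \calF_-$, so the two coordinate blocks of the identity $f(x) = f(-x)$ read
\[ s_p := \sum_{A \in \calF_+} c_A(x)\, p_A = \sum_{B \in \calF_-} c_B(-x)\, p_B \qquad\text{and}\qquad s_q := \sum_{A \in \calF_+} c_A(x)\, q_A = \sum_{B \in \calF_-} c_B(-x)\, q_B. \]

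The key step is to estimate $\langle s_p, s_q\rangle$ from both sides. Evaluating it with the first formula for $s_p$ and the second for $s_q$, every term is $c_A(x)\, c_B(-x)\, \langle p_A, q_B\rangle$ with $A \in \calF_+$ and $B \in \calF_-$; such $A, B$ are disjoint (as $H(x) \cap H(-x) = \emptyset$) and distinct (as $\emptyset \notin \calF$), so $\langle p_A, q_B\rangle = M_{A,B} = 0$ and hence $\langle s_p, s_q\rangle = 0$. Evaluating it instead entirely over $\calF_+$ gives $\sum_{A, A' \in \calF_+} c_A(x)\, c_{A'}(x)\, M_{A,A'}$, all of whose terms are non-negative because $M$ is, so $\langle s_p, s_q\rangle \ge \sum_{A \in \calF_+} c_A(x)^2\, M_{A,A}$, which is strictly positive once $\calF_+ \ne \emptyset$. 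Therefore $\calF_+ = \emptyset$, and symmetrically $\calF_- = \emptyset$: no member of $\calF$ lies inside $H(x)$ or inside $H(-x)$. Finally, with $X = \{i \in [d] : \langle x, y_i\rangle = 0\}$ we have $|X| \le 2r$ by general position, and coloring each $i \in [d] \setminus X$ red or blue according to whether $\langle x, y_i\rangle$ is positive or negative yields a proper $2$-coloring of the hypergraph $([d] \setminus X,\ \{A \in \calF : A \cap X = \emptyset\})$; hence $\cd_2(\calF) \le 2r$, that is, $\rank_\R(M) \ge \tfrac{1}{2}\cd_2(\calF)$.

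The main obstacle is precisely this two-sided estimate on $\langle s_p, s_q\rangle$, and it is there that the two hypotheses of the lemma enter: non-negativity of $M$ is what lets us discard the off-diagonal terms of $\sum_{A, A' \in \calF_+} c_A(x)\, c_{A'}(x)\, M_{A,A'}$ to retain a strictly positive lower bound, while $\emptyset \notin \calF$ is what guarantees that a disjoint pair of members of $\calF$ is a pair of \emph{distinct} vertices, so that the zero pattern of the representation forces the cross terms to vanish. The remaining ingredients — continuity of $f$, matching the Borsuk--Ulam codomain $\R^{2r}$ with the sphere $\S^{2r}$, and the general-position estimate $|X| \le 2r$ — are the same as in the proof of Theorem~\ref{thm:cd}, Item~\ref{thm:cd_itm:1}, and require no new ideas.
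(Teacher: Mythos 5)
Your proposal is correct and follows essentially the same route as the paper's proof: the map $f(x) = \bigl(\sum_A c_A(x)\,p_A,\ \sum_A c_A(x)\,q_A\bigr)$ is exactly the paper's $\sum_A c_A(x)\,w_A$ with $w_A = u_A \circ v_A$ from a rank factorization $M = M_1^T M_2$, and your two-sided evaluation of $\langle s_p, s_q\rangle$ (zero via the mixed $\calF_+/\calF_-$ expansion by disjointness, strictly positive via the same-side expansion by non-negativity and positive diagonal) is the paper's orthogonality-versus-positivity contradiction. The concluding general-position and red/blue coloring step giving $\cd_2(\calF) \le 2r$ also matches the paper.
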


\begin{proof}
Let $\calF$ be a set system of size $n$ with ground set $[d]$ such that $\emptyset \notin \calF$, let $M \in \R^{n \times n}$ be a non-negative matrix that represents the graph $\overline{K(\calF)}$ over $\R$, and put $t = {\rank}_\R(M)$. Write $M = M_1^T \cdot M_2$ for matrices $M_1,M_2 \in \R^{t \times n}$. For every set $A \in \calF$, let $u_A$ and $v_A$ be the $t$-dimensional columns associated with $A$ in $M_1$ and $M_2$ respectively, and let $w_A = u_A \circ v_A$ be the $2t$-dimensional concatenation of $u_A$ and $v_A$.
Since $M$ represents $\overline{K(\calF)}$, we have $\langle u_A,v_A \rangle \neq 0$ for every $A \in \calF$, and $\langle u_A,v_B \rangle = 0$ for every disjoint sets $A,B \in \calF$. By the assumption that $M$ is non-negative, we also have $\langle u_A,v_B \rangle \geq 0$ for all $A,B \in \calF$.

Let $y_1,\ldots,y_d \in \S^{2t}$ be $d$ points in a general position. As before, for a set $A \subseteq [d]$ denote $y_A = \{ y_i \mid i \in A \}$. Define a function $f:\S^{2t} \rightarrow \R^{2t}$ by
\[ f(x) = \sum_{A \in \calF}{w_A \cdot \prod_{j \in A}{\max(\langle x, y_j\rangle,0)}}.\]
Observe that for every $x \in \S^{2t}$, $f(x)$ is a linear combination with positive coefficients of the vectors $w_A$ such that $y_A \subseteq H(x)$.
The function $f$ is clearly continuous, hence by Theorem~\ref{thm:BU} there exists $x \in \S^{2t}$ such that $f(x) = f(-x)$.
For this $x$, denote $f(x) = f(-x) = w = w_1 \circ w_2$ where $w_1,w_2 \in \R^t$.
By $f(x)=w$ we get that $w_1$ is a linear combination of the vectors $u_A$ with $y_A \subseteq H(x)$, and by $f(-x)=w$ we get that $w_2$ is a linear combination of the vectors $v_A$ with $y_A \subseteq H(-x)$.
However, the sets $A$ with $y_A \subseteq H(x)$ are all disjoint form the sets $A$ with $y_A \subseteq H(-x)$, hence
the fact that $\langle u_A,v_B \rangle = 0$ for every disjoint sets $A,B \in \calF$ yields that the vectors $w_1$ and $w_2$ are orthogonal.

We claim now that there is no $A \in \calF$ with $y_A \subseteq H(x)$. To see this, assume in contradiction that $A_1, \ldots, A_m$ are the sets with this property ($m \geq 1$). By the definition of $f$ we can write $w = f(x)= \sum_{i=1}^{m}{c_i \cdot w_{A_i}}$ for some positive coefficients $c_i>0$. However, this implies that
\[ \langle w_1,w_2\rangle = \langle \sum_{i=1}^{m}{c_i \cdot u_{A_i}} ~, \sum_{i=1}^{m}{c_i \cdot v_{A_i}}\rangle = \sum_{1 \leq i,j \leq m}{c_i c_j \cdot \langle u_{A_i},v_{A_j} \rangle} > 0, \]
where the inequality holds since $\langle u_{A_i},v_{A_j} \rangle \geq 0$ for all pairs $i,j$ and $\langle u_{A_i},v_{A_i} \rangle > 0$ for every $i$.
This is in contradiction to the fact that the vectors $w_1$ and $w_2$ are orthogonal.
By the same reasoning, there is no $A \in \calF$ with $y_A \subseteq H(-x)$.

Finally, let $X$ denote the set of indices $i \in [d]$ for which $y_i$ does not belong to $H(x)$ nor to $H(-x)$. By the assumption of general position, $|X| \leq 2t$. We color the elements of $[d] \setminus X$ as follows: If $y_i \in H(x)$ then $i$ is colored red, and if $y_i \in H(-x)$ then $i$ is colored blue. Since no set $A \in \calF$ satisfies $y_A \subseteq H(x)$ or $y_A \subseteq H(-x)$, we get a proper $2$-coloring of the hypergraph $([d] \setminus X, \{ A \in \calF \mid A \cap X = \emptyset \})$. This implies that $\cd_2(\calF) \leq 2t$, as required.
\end{proof}

Equipped with Lemma~\ref{lemma:minrank_nonneg}, we are ready to complete the proof of Theorem~\ref{thm:cd}.

\begin{proof}[ of Theorem~\ref{thm:cd}, Item~\ref{thm:cd_itm:3}]
Let $\calF$ be a set system of size $n$ such that $\emptyset \notin \calF$, and let $M$ be an $n \times n$ matrix that represents the graph $\overline{K(\calF)}$ over $\R$.
Consider the $n \times n$ matrix $M'$ defined by $M'_{i,j} = M_{i,j}^2$ for all $i,j$.
It is well known and easy to check that $M'$ is a principal sub-matrix of the tensor product $M \otimes M$ of $M$ with itself, hence
\[ {\rank}_\R (M') \leq {\rank}_\R (M \otimes M) = {\rank}_\R (M)^2.\]
The non-negative matrix $M'$ represents $\overline{K(\calF)}$ since it has the same zero pattern as $M$, so we can apply Lemma~\ref{lemma:minrank_nonneg} to obtain that
\[ {\rank}_\R(M) \geq \sqrt{{\rank}_\R(M')} \geq \sqrt{\frac{{\cd}_2(\calF)}{2}},\]
completing the proof.
\end{proof}

\subsubsection{The Kneser Graph}\label{sec:Kneser}
Recall that for integers $d \geq 2s$, the Kneser graph $K(d,s)$ is the graph whose vertices are all the $s$-subsets of $[d]$, where two sets are adjacent if they are disjoint.
We need the following simple claim (see, e.g.,~\cite[Section~3.4]{MatousekBook}).

\begin{claim}\label{claim:cd2_Kneser}
For integers $d \geq 2s$, let $\calF$ be the collection of all $s$-subsets of $[d]$. Then, $\cd_2(\calF) = d-2s+2$.
\end{claim}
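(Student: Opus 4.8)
The plan is to prove the two inequalities separately, working directly from the combinatorial reformulation of $\cd_2$ in Definition~\ref{def:cd2}: $\cd_2(\calF)$ is the minimum number of white elements over all colorings of $[d]$ by red, blue, and white in which no member of $\calF$ is entirely red or entirely blue. Since here $\calF$ consists of \emph{all} $s$-subsets of $[d]$, a red/blue/white coloring is admissible if and only if neither the red class nor the blue class contains an $s$-subset, i.e. if and only if each of the red and blue classes has size at most $s-1$.

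For the upper bound $\cd_2(\calF) \le d-2s+2$, I would exhibit an explicit admissible coloring. Using $d \ge 2s$ (so that $d-2s+2 \ge 2 \ge 0$ and there are enough elements), color some $s-1$ elements red, another $s-1$ elements blue, and the remaining $d-2s+2$ elements white. Each colored class has size $s-1 < s$, so no $s$-subset of $[d]$ lies inside it; the coloring is admissible and has exactly $d-2s+2$ white elements.

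For the lower bound $\cd_2(\calF) \ge d-2s+2$, I would argue by contradiction (equivalently, in the set-removal formulation). Suppose $X \subseteq [d]$ has $|X| = k \le d-2s+1$ and the hypergraph on $[d]\setminus X$ with hyperedges $\{A \in \calF \mid A \cap X = \emptyset\}$ is $2$-colorable. Then $|[d]\setminus X| = d-k \ge 2s-1$, so in any red/blue partition of $[d]\setminus X$ one color class has size at least $\lceil (2s-1)/2 \rceil = s$; picking $s$ of its elements yields an $s$-subset of $[d]$ disjoint from $X$ that is monochromatic, contradicting $2$-colorability. Hence every feasible $X$ has $|X| \ge d-2s+2$, and combining with the previous paragraph gives $\cd_2(\calF) = d-2s+2$.

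I do not expect a genuine obstacle here; the argument is an explicit construction plus a one-line pigeonhole. The only points requiring care are the edge cases (notably $s=1$, where both colored classes must be empty and the formula correctly reads $\cd_2(\calF)=d$) and using the correct ceiling in the pigeonhole step, both of which are routine.
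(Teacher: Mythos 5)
Your proposal is correct and matches the paper's argument essentially verbatim: the upper bound via a coloring with two classes of size $s-1$ (the paper's ``balanced $2$-coloring'' of the $2s-2$ elements outside a set $X$ of size $d-2s+2$), and the lower bound via the pigeonhole observation that any $2$-coloring of at least $2s-1$ elements has a monochromatic class of size at least $s$. No gaps; the edge-case remarks are fine but not needed.
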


\begin{proof}
Let $X \subseteq [d]$ be an arbitrary set of size $d-2s+2$, and consider an arbitrary balanced $2$-coloring of the $2s-2$ elements of $[d] \setminus X$. Clearly, no $s$-subset of $[d] \setminus X$ is monochromatic, hence $\cd_2(\calF) \leq d-2s+2$.
For the other direction, notice that for every $X \subseteq [d]$ of size at most $d-2s+1$ there are at least $2s-1$ elements in $[d] \setminus X$, hence every $2$-coloring of $[d] \setminus X$ includes a monochromatic $s$-subset. This implies that $\cd_2(\calF) \geq d-2s+2$ and completes the proof.
\end{proof}

The following corollary summarizes our bounds for the Kneser graph.

\begin{corollary}\label{cor:xiKneser}
For every integers $d \geq 2s$,
\begin{enumerate}
  \item\label{itm:Kneser} $\xi_\R(\overline{K(d,s)}) = d-2s+2$,
  \item $\xi_\C(\overline{K(d,s)}) \geq (d-2s+2)/2$, and
  \item ${\minrank}_\R(\overline{K(d,s)}) \geq \sqrt{(d-2s+2)/2}$.
\end{enumerate}
\end{corollary}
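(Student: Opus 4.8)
The plan is to obtain the three lower bounds as an immediate consequence of Theorem~\ref{thm:cd} together with the evaluation of the $2$-colorability-defect provided by Claim~\ref{claim:cd2_Kneser}, and then to supply the matching upper bound in Item~\ref{itm:Kneser} from the trivial sandwich inequality. Fix integers $d \geq 2s$ (with $s \geq 1$, so that the empty set is excluded), and let $\calF$ be the collection of all $s$-subsets of $[d]$; then $K(\calF) = K(d,s)$ and $\emptyset \notin \calF$, so Theorem~\ref{thm:cd} applies. Claim~\ref{claim:cd2_Kneser} gives $\cd_2(\calF) = d-2s+2$. Substituting this value into Items~\ref{thm:cd_itm:1},~\ref{thm:cd_itm:2}, and~\ref{thm:cd_itm:3} of Theorem~\ref{thm:cd} yields at once $\xi_\R(\overline{K(d,s)}) \geq d-2s+2$, $\xi_\C(\overline{K(d,s)}) \geq (d-2s+2)/2$, and ${\minrank}_\R(\overline{K(d,s)}) \geq \sqrt{(d-2s+2)/2}$, which are the three asserted inequalities.

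It remains to prove the reverse inequality $\xi_\R(\overline{K(d,s)}) \leq d-2s+2$ needed for the equality in Item~\ref{itm:Kneser}. I would invoke the elementary bound $\xi_\Fset(G) \leq \chi(\overline{G})$ recorded in the introduction, applied with $G = \overline{K(d,s)}$, which gives $\xi_\R(\overline{K(d,s)}) \leq \chi(K(d,s))$; Kneser's easy observation that each $s$-set $A$ can be colored by $\min(A \cup \{d-2s+2\})$ shows $\chi(K(d,s)) \leq d-2s+2$, and we are done. (Concretely, one may also write down the orthogonal representation directly: take any proper coloring of $K(d,s)$ with $d-2s+2$ colors and assign to each vertex $A$ the standard basis vector indexed by its color; disjoint $s$-sets, being adjacent in $K(d,s)$, receive distinct colors and hence orthogonal vectors, while each assigned vector has unit norm, so this is a valid $(d-2s+2)$-dimensional orthogonal representation of $\overline{K(d,s)}$ over $\R$.)

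There is no real obstacle here: the corollary is simply an assembly of Theorem~\ref{thm:cd}, Claim~\ref{claim:cd2_Kneser}, and the trivial upper bound. The only point worth flagging is that the lower bound of Theorem~\ref{thm:cd}, Item~\ref{thm:cd_itm:1}, already meets the trivial upper bound $\chi(K(d,s)) \leq d-2s+2$, pinning down $\xi_\R(\overline{K(d,s)})$ exactly; in particular, Lov\'asz's lower bound $\chi(K(d,s)) \geq d-2s+2$ is not needed, since it is subsumed by the chain $d-2s+2 = \xi_\R(\overline{K(d,s)}) \leq \chi(K(d,s))$.
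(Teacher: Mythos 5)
Your proposal is correct and follows essentially the same route as the paper: the three lower bounds are obtained by plugging Claim~\ref{claim:cd2_Kneser} into Theorem~\ref{thm:cd}, and the matching upper bound in Item~\ref{itm:Kneser} comes from $\xi_\R(\overline{K(d,s)}) \leq \chi(K(d,s)) \leq d-2s+2$. Your added remarks (the explicit basis-vector representation and the observation that only Kneser's easy coloring, not Lov\'asz's lower bound, is needed for the upper bound) are fine but not a different argument.
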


\begin{proof}
Notice that $K(d,s)$ is the graph $K(\calF)$ where $\calF$ is the collection of all $s$-subsets of $[d]$.
The three lower bounds follow directly by combining Theorem~\ref{thm:cd} with Claim~\ref{claim:cd2_Kneser}.
The matching upper bound in Item~\ref{itm:Kneser} follows by $\xi_\R(\overline{K(d,s)}) \leq \chi(K(d,s)) = d-2s+2$.
\end{proof}

\subsection{Proof of Theorem~\ref{thm:geo}}

We prove below Item~\ref{thm:geom_itm:1} of Theorem~\ref{thm:geo}. The other two items follow similarly, using ideas from the proofs of Items~\ref{thm:cd_itm:2} and~\ref{thm:cd_itm:3} of Theorem~\ref{thm:cd}. To avoid repetitions, we omit the details.

\begin{proof}[ of Theorem~\ref{thm:geo}, Item~\ref{thm:geom_itm:1}]
Let $\calF$ be a set system with ground set $[d]$ such that $\emptyset \notin \calF$, and let $y_1,\ldots,y_d \in \S^{t-2}$ be the points given in the theorem.
Put $t' = \xi_\R(\overline{K(\calF)})$.
Then there exists an assignment of a nonzero vector $u_A \in \R^{t'}$ to every set $A \in \calF$, such that $\langle u_A,u_B \rangle = 0$ for every disjoint sets $A,B \in \calF$. It can be assumed without loss of generality that the first nonzero coordinate in every vector $u_A$ is positive (otherwise replace $u_A$ by $-u_A$).

Define a function $f:\S^{t-2} \rightarrow \R^{t'}$ by
\[ f(x) = \sum_{A \in \calF}{u_A \cdot \prod_{j \in A}{\max(\langle x, y_j\rangle,0)}}.\]
For any $x \in \S^{t-2}$, let $C_x$ be the collection of sets $A \in \calF$ such that $y_A \subseteq H(x)$, where, as before, $y_A = \{ y_i \mid i \in A \}$. By the assumption on the points $y_1,\ldots,y_d$ we have $|C_x| \geq 1$.
Observe that $f(x)$ is a linear combination with positive coefficients of the vectors $u_A$ with $A \in C_x$.
Letting $j \in [t']$ be the least coordinate in which at least one of the vectors of $\{ u_{A} \mid  A \in C_x \}$ is nonzero, it follows that the $j$th coordinate of $f(x)$ is positive, hence $f(x)$ is nonzero.
Further, by $H(x) \cap H(-x) = \emptyset$ we get that the sets of $C_x$ are all disjoint from those of $C_{-x}$, hence the fact that $\langle u_A,u_B \rangle = 0$ for every disjoint sets $A,B \in \calF$ yields that the vectors $f(x)$ and $f(-x)$ are orthogonal.

Consider the function $g:\S^{t-2} \rightarrow \S^{t'-1}$ defined by
\[g(x) = \frac{f(x)}{\|f(x)\|}.\]
Note that $g$ is well defined as $f(x)$ is nonzero for every $x \in \S^{t-2}$. Consider also the function $\widetilde{g} : \S^{t-2} \rightarrow \R^{t'-1}$ that maps every $x \in \S^{t-2}$ to the projection of $g(x)$ to its last $t'-1$ coordinates (i.e., all of its coordinates besides the first one). We claim that there is no $x \in \S^{t-2}$ such that $\widetilde{g}(x)=\widetilde{g}(-x)$. To see this, notice that $g(x)$ is a unit vector whose first entry is non-negative, so the projection of $g(x)$ to its last $t'-1$ coordinates fully determines $g(x)$.
This implies that if there exists an $x \in \S^{t-2}$ satisfying $\widetilde{g}(x)=\widetilde{g}(-x)$ then this $x$ also satisfies $g(x)=g(-x)$, in contradiction to the orthogonality of $f(x)$ and $f(-x)$. Since $\widetilde{g}$ is continuous we can apply Theorem~\ref{thm:BU} to derive that $t'-1 > t-2$ which implies that $t' \geq t$ and completes the proof.
\end{proof}

\subsubsection{The Schrijver Graph}

We say that a set $A \subseteq [d]$ is {\em stable} if it does not contain two consecutive elements modulo $d$ (that is, if $i \in A$ then $i+1 \notin A$, and if $d \in A$ then $1 \notin A$). In other words, a stable subset of $[d]$ is an independent set in the cycle $C_d$ with the numbering from $1$ to $d$ along the cycle.
Recall that for $d \geq 2s$, the Schrijver graph $S(d,s)$ is the graph whose vertices are all the stable $s$-subsets of $[d]$, where two sets are adjacent if they are disjoint.

We need the following strengthening of a lemma of Gale~\cite{Gale56} proved by Schrijver in~\cite{SchrijverKneser78}.
See~\cite[Section~3.5]{MatousekBook} for a nice proof by Ziegler based on the moment curve.

\begin{lemma}[\cite{SchrijverKneser78}]\label{lemma:Gale}
For every integers $d \geq 2s$, there exist points $y_1,\ldots,y_d \in \S^{d-2s}$ such that every open hemisphere of $\S^{d-2s}$ contains the points of $\{ y_i \mid i \in A \}$ for some stable $s$-subset $A$ of $[d]$.
\end{lemma}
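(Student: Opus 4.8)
The plan is to place the $d$ points on the moment curve in $\R^{d-2s+1}$ (which gives $\S^{d-2s}$ after normalization) and exploit the sign-change behavior of polynomials. Concretely, I would choose real numbers $\tau_1 < \tau_2 < \cdots < \tau_d$ and set $z_i = (1, \tau_i, \tau_i^2, \ldots, \tau_i^{d-2s}) \in \R^{d-2s+1}$, then let $y_i = z_i / \|z_i\|$ so that $y_i \in \S^{d-2s}$. An open hemisphere of $\S^{d-2s}$ is determined by a vector $x \in \S^{d-2s}$, and the condition $y_i \in H(x)$ is equivalent to $\langle x, z_i \rangle > 0$; writing $x = (c_0, c_1, \ldots, c_{d-2s})$, this is exactly $p(\tau_i) > 0$ where $p(\tau) = c_0 + c_1 \tau + \cdots + c_{d-2s}\tau^{d-2s}$ is a real polynomial of degree at most $d-2s$.

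The key combinatorial step is then: for any nonzero polynomial $p$ of degree at most $d-2s$, the set $A = \{ i \in [d] \mid p(\tau_i) > 0 \}$ must contain a stable $s$-subset of $[d]$. Here I would argue via a sign-change count. Since $p$ has at most $d-2s$ real roots, as $\tau$ ranges over $\tau_1, \ldots, \tau_d$ the sign of $p(\tau_i)$ changes at most $d-2s$ times, so the sequence of indices $1, \ldots, d$ splits into at most $d-2s+1$ maximal "runs" of consecutive indices on which $p$ has constant sign (positive, negative, or zero — though on the interior of a run it is nonzero, so positive or negative, up to handling the measure-zero case $p(\tau_i)=0$, which can be excluded by a genericity perturbation of the $\tau_i$, or handled by noting we only need $\{i : p(\tau_i) > 0\}$). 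Among these runs, the ones on which $p$ is positive collectively contain all of $A$; the crucial point is that $A$, viewed inside the cycle $C_d$, is a union of arcs whose total "size surplus" is large enough: one shows that a union of runs covering $A$ with at most $d-2s+1$ runs total forces $A$ to have an independent set of size at least $s$ in $C_d$. The cleanest way is: the complement structure gives at most $d-2s$ sign changes, hence $A$ decomposes as a union of at most $\lceil (d-2s+1)/2 \rceil$ arcs in the path $1,\ldots,d$ (alternating positive/negative blocks), and a routine counting argument — a union of arcs of total length $\ell$ sitting inside a path on $d$ vertices, arising from $k$ sign changes, contains an independent set of size at least $\lceil (d - (d-2s))/2 \rceil = s$ — yields a stable $s$-set. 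I would cite Ziegler's proof in~\cite[Section~3.5]{MatousekBook} for the precise bookkeeping rather than reproduce it.

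The main obstacle is the final counting argument converting "few sign changes of a low-degree polynomial" into "contains a stable $s$-subset of the cycle $C_d$": one has to be careful that the arcs of positivity wrap around correctly modulo $d$ (the moment-curve sign pattern is naturally on the line, not the cycle, so the endpoints $1$ and $d$ need attention), and that the degree bound $d-2s$ translates into exactly the right independent-set size $s$ rather than $s-1$. I expect to handle the cycle-versus-path discrepancy by observing that at most $d-2s$ sign changes on the line means at most $d-2s+1$ blocks, and even after identifying the two ends of the path into a cycle the positive region is a union of at most $s$ arcs leaving at least $s$ "gaps", from which picking one vertex per positive arc (or a standard greedy argument on each arc) produces the stable $s$-set; alternatively, invoke Schrijver's original argument in~\cite{SchrijverKneser78} verbatim. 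Since a fully rigorous and elegant treatment already appears in~\cite{MatousekBook}, the proof in the paper can legitimately be reduced to this citation.
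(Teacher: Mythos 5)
Your construction omits the one ingredient that makes Gale's lemma (and Schrijver's strengthening of it) true: the alternating signs. With the plain moment curve, $y_i = z_i/\|z_i\|$ where $z_i = (1,\tau_i,\ldots,\tau_i^{d-2s})$, every $z_i$ has first coordinate $1>0$, so all of $y_1,\ldots,y_d$ lie in the closed hemisphere around the first standard basis vector $e_1$; the antipodal open hemisphere $H(-e_1)$ then contains no point $y_i$ at all, and the lemma fails outright. Equivalently, your ``key combinatorial step'' is false as stated: the positivity set $\{i : p(\tau_i)>0\}$ of a nonzero polynomial of degree at most $d-2s$ need not contain a stable $s$-subset --- take $p\equiv -1$ (empty positivity set), or, when $d-2s\ge 1$ and $s\ge 2$, $p(\tau)=c-\tau$ with $\tau_1<c<\tau_2$ (positivity set $\{1\}$), or, when $d-2s\ge 2$, a quadratic positive exactly at $\tau_1,\ldots,\tau_{2s-2}$, whose positivity set is a single run of $2s-2$ consecutive indices and so has largest stable subset of size $s-1$. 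So the problem is not the ``precise bookkeeping'' of the cycle-versus-path transition that you propose to outsource to \cite{MatousekBook}; no bookkeeping can rescue the unsigned moment curve, and your count $\lceil (d-(d-2s))/2\rceil = s$ tacitly assumes the positive part is large, which is exactly what can fail.

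The proof the paper points to (Ziegler's, in \cite[Section~3.5]{MatousekBook}, cited alongside \cite{SchrijverKneser78} in lieu of a written proof) takes $y_i$ proportional to $(-1)^i(1,\tau_i,\ldots,\tau_i^{d-2s})$, so that membership of $y_i$ in the hemisphere determined by $x$ reads $(-1)^i p(\tau_i)>0$, and the degree bound is used in the direction opposite to your sketch: since $p$ has at most $d-2s$ roots, the sequence $p(\tau_1),\ldots,p(\tau_d)$ has at most $d-2s$ sign changes, hence the alternated sequence $(-1)^i p(\tau_i)$ has at least $d-1-(d-2s)=2s-1$ sign changes; it therefore splits into at least $2s$ alternating constant-sign blocks, at least $s$ of them positive, and picking one index per positive block (with a minor adjustment at the wrap-around between $d$ and $1$, and a separate treatment of indices with $p(\tau_i)=0$) produces the stable $s$-subset. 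The $(-1)^i$ twist is precisely what guarantees the positive set is spread around the cycle and cannot be small or empty; it is the missing idea in your proposal. Falling back on the citation, as the paper itself does, is legitimate, but the sketch you give is not a correct account of the cited argument.
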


For $d \geq 2s$, consider the collection $\calF$ of all stable $s$-subsets of $[d]$, and notice that $S(d,s)$ is the graph $K(\calF)$.
By Lemma~\ref{lemma:Gale}, $\calF$ satisfies the condition of Theorem~\ref{thm:geo} for $t=d-2s+2$.
This directly implies the following corollary which summarizes our bounds for the Schrijver graph.

\begin{corollary}\label{cor:xiSchrijver}
For every integers $d \geq 2s$,
\begin{enumerate}
  \item\label{itm:Kneser} $\xi_\R(\overline{S(d,s)}) = d-2s+2$,
  \item $\xi_\C(\overline{S(d,s)}) \geq (d-2s+2)/2$, and
  \item ${\minrank}_\R(\overline{S(d,s)}) \geq \sqrt{(d-2s+2)/2}$.
\end{enumerate}
\end{corollary}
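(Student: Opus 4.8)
The plan is to derive all three inequalities as an immediate consequence of Theorem~\ref{thm:geo} together with Schrijver's Gale-type lemma (Lemma~\ref{lemma:Gale}), and to obtain the matching upper bound in the first item from the elementary inequality $\xi_\R(\overline{G}) \leq \chi(G)$, which holds for every graph $G$.

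First I would record the structural observation already noted before the statement: the Schrijver graph $S(d,s)$ is exactly the generalized Kneser graph $K(\calF)$ for the set system $\calF$ of all stable $s$-subsets of $[d]$, and $\emptyset \notin \calF$ (the case $s=0$ being trivial). Next I would put $t = d-2s+2$, so that $\S^{t-2} = \S^{d-2s}$, and invoke Lemma~\ref{lemma:Gale} to obtain points $y_1,\ldots,y_d \in \S^{t-2}$ such that every open hemisphere of $\S^{t-2}$ contains $\{ y_i \mid i \in A \}$ for some $A \in \calF$. This is precisely the hypothesis of Theorem~\ref{thm:geo} for this value of $t$. Applying Theorem~\ref{thm:geo} then yields $\xi_\R(\overline{S(d,s)}) \geq d-2s+2$, $\xi_\C(\overline{S(d,s)}) \geq (d-2s+2)/2$, and ${\minrank}_\R(\overline{S(d,s)}) \geq \sqrt{(d-2s+2)/2}$, which are the three claimed lower bounds.

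It then remains to supply the matching upper bound $\xi_\R(\overline{S(d,s)}) \leq d-2s+2$ in the first item. For this I would apply $\xi_\R(\overline{G}) \leq \chi(G)$ with $G = S(d,s)$, and combine it with the fact that $S(d,s)$ is an induced subgraph of $K(d,s)$, so that $\chi(S(d,s)) \leq \chi(K(d,s)) = d-2s+2$ (alternatively, one can quote Schrijver's theorem that $\chi(S(d,s)) = d-2s+2$ directly). Together with the lower bound this gives the equality in Item~1.

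Since essentially all the content has been absorbed into Theorem~\ref{thm:geo} and Lemma~\ref{lemma:Gale}, I do not expect a genuine obstacle here. The only point requiring care is the bookkeeping with sphere dimensions: one must check that the ambient dimension in Lemma~\ref{lemma:Gale}, namely $d-2s$, equals $t-2$ for the choice $t = d-2s+2$, so that Theorem~\ref{thm:geo} applies with exactly this $t$ and delivers the bound $d-2s+2$ rather than a value off by an additive constant.
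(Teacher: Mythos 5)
Your proposal is correct and matches the paper's own argument: the three lower bounds follow by combining Lemma~\ref{lemma:Gale} with Theorem~\ref{thm:geo} for $t=d-2s+2$, and the upper bound in the first item comes from $\xi_\R(\overline{S(d,s)}) \leq \chi(S(d,s)) \leq d-2s+2$, exactly as intended. The dimension bookkeeping you flag ($d-2s = t-2$) is indeed the only thing to check, and it works out.
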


\begin{remark}
We note that the bounds that Theorem~\ref{thm:cd} implies for the Schrijver graph $S(d,s)$ are weaker than the bounds obtained above. Indeed, it is easy to check that the set system $\calF$ that corresponds to the graph $S(d,s)$ satisfies $\cd_2(\calF) = d-4s+4$.
For a discussion comparing the bounds derived from Theorems~\ref{thm:cd} and~\ref{thm:geo}, see~\cite[Section~6]{MatousekZ04}.
\end{remark}

\subsubsection{The Borsuk Graph}\label{sec:Borsuk}

For $0< \alpha <2$ and an integer $d$, the Borsuk graph $B(d,\alpha)$ is defined as the (infinite) graph on the vertex set $\S^{d-1}$ where two points $y,y' \in \S^{d-1}$ are adjacent if $\|y-y'\| \geq \alpha$.
It is known that the Borsuk-Ulam theorem implies that $\chi(B(d,\alpha)) \geq d+1$ for every $0< \alpha <2$ and $d$, and that this bound is tight whenever $\alpha \geq \sqrt{2(d+1)/d}$ (see, e.g.,~\cite{Lovasz83}).
We apply here the proof technique of Theorem~\ref{thm:geo} to obtain the same bound on the orthogonality dimension over $\R$ of the complement of $B(d,\alpha)$. We first prove the following.

\begin{theorem}\label{thm:BorsukGraph}
For $0< \eps <1$ and an integer $d$, let $V \subseteq \S^{d-1}$ be a finite collection of points in $\S^{d-1}$ such that for every $x \in \S^{d-1}$ there exists $y \in V$ for which $\|x-y\| < \eps$. Let $G$ be the graph on the vertex set $V$ where two points $y,y' \in V$ are adjacent if $\|y-y'\| \geq 2-2\eps$. Then, $\xi_\R(\overline{G}) \geq d+1$.
\end{theorem}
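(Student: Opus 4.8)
The plan is to mimic the proof of Theorem~\ref{thm:geo}, Item~\ref{thm:geom_itm:1}, with the points of $V$ playing the role of the $y_i$'s and with the "set system" $\calF$ being the collection of independent sets of $\overline{G}$ that are realized as a subset of some open hemisphere. More precisely: write $t' = \xi_\R(\overline{G})$, take an orthogonal representation $(u_y)_{y \in V}$ of $\overline{G}$ over $\R^{t'}$ with $\langle u_y, u_{y'}\rangle = 0$ whenever $y,y'$ are non-adjacent in $\overline{G}$ (i.e., adjacent in $G$, i.e., $\|y-y'\| \geq 2-2\eps$), and normalize so that the first nonzero coordinate of each $u_y$ is positive. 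Define $f:\S^{d-1} \to \R^{t'}$ by
\[ f(x) = \sum_{y \in V} u_y \cdot \max(\langle x, y \rangle, 0). \]
This is continuous. The crucial structural observation is that for $x \in \S^{d-1}$, the vectors $u_y$ appearing with positive coefficient in $f(x)$ are exactly those with $y \in H(x)$, and the covering hypothesis on $V$ will be used to guarantee $f(x) \neq 0$.

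First I would verify that $f(x)$ is nonzero for every $x$. By hypothesis there is $y_0 \in V$ with $\|x - y_0\| < \eps$; since $\|x-y_0\|^2 = 2 - 2\langle x, y_0\rangle$, this gives $\langle x, y_0 \rangle > 1 - \eps^2/2 > 0$, so $y_0 \in H(x)$ and the set $C_x = \{ y \in V \mid y \in H(x)\}$ is nonempty. Taking $j$ to be the least coordinate in which some $u_y$, $y \in C_x$, is nonzero, positivity of the first nonzero coordinate of each such $u_y$ together with positivity of the coefficients forces the $j$th coordinate of $f(x)$ to be positive, so $f(x) \neq 0$. Next I would check that $f(x)$ and $f(-x)$ are orthogonal: $f(x)$ is a combination of $u_y$ with $y \in H(x)$, and $f(-x)$ of $u_{y'}$ with $y' \in H(-x)$; for such $y, y'$ we have $\langle x, y\rangle > 0$ and $\langle x, y'\rangle < 0$, so $\langle y, y'\rangle$... here is the step that needs the quantitative version of the hypothesis. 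Since $\|x-y\| < \eps$ need not hold for all $y \in H(x)$, I instead argue: if $y \in H(x) \cap V$ and $y' \in H(-x) \cap V$ then I want $\|y - y'\| \geq 2 - 2\eps$, i.e. $\langle y, y'\rangle \leq -1 + \eps$, which is \emph{not} automatic merely from $y \in H(x)$, $y' \in H(-x)$. The fix is to not take all of $H(x)$ but to restrict attention to the single guaranteed point near $x$: reconsider $f$ so that the combinatorial core mirrors Theorem~\ref{thm:geo} applied to the set system whose members are the singletons $\{y\}$ — but then disjointness is never achieved.

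So the correct route, and the one I would actually carry out, is the hemisphere argument exactly as in Theorem~\ref{thm:geo}: for $y \in H(x)$ and $y' \in H(-x)$ it suffices that whenever $f(x)$ and $f(-x)$ are both forced equal by Borsuk–Ulam (so both zero after the nonzero-coordinate trick gives a contradiction). Concretely, apply the Borsuk–Ulam machinery as in the proof of Theorem~\ref{thm:geo}: let $g(x) = f(x)/\|f(x)\|$, a well-defined continuous map $\S^{d-1} \to \S^{t'-1}$, and let $\widetilde g$ be its projection onto the last $t'-1$ coordinates; because the first coordinate of $g(x)$ is non-negative, $\widetilde g(x)$ determines $g(x)$, so $\widetilde g(x) = \widetilde g(-x)$ would force $g(x) = g(-x)$, contradicting orthogonality of $f(x), f(-x)$ — \emph{provided} that orthogonality holds, which reduces exactly to: for $y \in V \cap H(x)$ and $y' \in V \cap H(-x)$, $y$ and $y'$ are adjacent in $G$. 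The resolution of this last point is that we should build $f$ not from all of $V$ but should instead observe that the relevant pairs $y \in H(x)$, $y' \in H(-x)$ have $\langle y, y' \rangle$ of opposite sign structure, and to push this to $\langle y,y'\rangle \le -1+\eps$ one uses that the adjacency threshold $2-2\eps$ corresponds to $\langle y,y'\rangle \le -1+\eps$, and $y\in H(x), y'\in H(-x)$ only gives $\langle y, y'\rangle < \langle y, y\rangle \cdot 0 = 0$ — insufficient. Hence the genuine argument must replace the continuous weight $\max(\langle x,y\rangle,0)$ by one supported on $\{y : \|x-y\| < \eps\}$, e.g. $\max(\langle x,y\rangle - (1 - \eps^2/2), 0)$; then any $y$ contributing to $f(x)$ and any $y'$ contributing to $f(-x)$ satisfy $\|x-y\|<\eps$, $\|x+y'\| < \eps$, whence $\|y - y'\| \ge \|y-(-x)\|_{\text{via}} \ge 2 - \|x-y\| - \|x+y'\| > 2 - 2\eps$, giving adjacency in $G$ and hence $\langle u_y, u_{y'}\rangle = 0$. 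The covering hypothesis still guarantees some $y$ with $\|x-y\|<\eps$, so $f(x) \neq 0$ by the same least-coordinate argument. I expect this redefinition of the weight function to be the main obstacle — everything else then copies the proof of Theorem~\ref{thm:geo} verbatim, concluding via Borsuk–Ulam that $t' - 1 > d - 1$, i.e. $\xi_\R(\overline G) = t' \geq d+1$.
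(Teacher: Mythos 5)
Your final argument is correct and essentially identical to the paper's proof: the paper also builds $f(x)=\sum_{y\in V}u_y\cdot\max(\eps-\|x-y\|,0)$ (a weight with the same support $\{y:\|x-y\|<\eps\}$ as your $\max(\langle x,y\rangle-(1-\eps^2/2),0)$), uses the covering hypothesis and the positive-first-coordinate normalization to get $f(x)\neq 0$, the triangle inequality to get orthogonality of $f(x)$ and $f(-x)$, and the same normalize-and-project Borsuk--Ulam step to conclude $t'\geq d+1$. The detour through the hemisphere weight $\max(\langle x,y\rangle,0)$ fails exactly as you diagnose, and your corrected weight is the right fix, matching the paper.
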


\begin{proof}
For a graph $G$ as in the theorem, put $t = \xi_\R(\overline{G})$.
Then there exists an assignment of a nonzero vector $u_y \in \R^t$ to every point $y \in V$, such that $\langle u_y,u_{y'} \rangle = 0$ for every points $y,y' \in V$ satisfying $\|y-y'\| \geq 2-2\eps$. It can be assumed without loss of generality that the first nonzero coordinate in every vector $u_y$ is positive (otherwise replace $u_y$ by $-u_y$).

Define a function $f:\S^{d-1} \rightarrow \R^t$ by
\[ f(x) = \sum_{y \in V}{u_y \cdot \max ( \eps - \|x-y\| ,0 )}.\]
For an $x \in \S^{d-1}$, let $C_x \subseteq V$ be the set of all points $y \in V$ such that $\|x-y\| < \eps$.
By the assumption on $V$ we have $|C_x| \geq 1$.
Observe that $f(x)$ is a linear combination with positive coefficients of the vectors $u_y$ with $y \in C_x$.
Letting $j \in [t]$ be the least coordinate in which at least one of the vectors of $\{ u_{y} \mid  y \in C_x \}$ is nonzero, using the assumption that the first nonzero coordinate of every $u_{y}$ is positive, it follows that the $j$th coordinate of $f(x)$ is positive, hence $f(x)$ is nonzero.
Further, since the distance between $x$ and $-x$ is $2$, for every $y \in C_x$ and $y' \in C_{-x}$ we have $\|y-y'\| \geq 2-2\eps$, and thus $\langle u_y, u_{y'} \rangle = 0$. This implies that the vectors $f(x)$ and $f(-x)$ are orthogonal.

Consider the function $g:\S^{d-1} \rightarrow \S^{t-1}$ defined by
$g(x) = \frac{f(x)}{\|f(x)\|}$.
Note that $g$ is well defined as $f(x)$ is nonzero for every $x \in \S^{d-1}$. Consider also the function $\widetilde{g} : \S^{d-1} \rightarrow \R^{t-1}$ that maps every $x \in \S^{d-1}$ to the projection of $g(x)$ to its last $t-1$ coordinates. We claim that there is no $x \in \S^{d-1}$ such that $\widetilde{g}(x)=\widetilde{g}(-x)$. To see this, notice that $g(x)$ is a unit vector whose first entry is non-negative, so the projection of $g(x)$ to its last $t-1$ coordinates fully determines $g(x)$.
This implies that if there exists an $x \in \S^{d-1}$ satisfying $\widetilde{g}(x)=\widetilde{g}(-x)$ then this $x$ also satisfies $g(x)=g(-x)$, in contradiction to the orthogonality of $f(x)$ and $f(-x)$. Since $\widetilde{g}$ is continuous we can apply Theorem~\ref{thm:BU} to derive that $t-1 > d-1$ which implies that $t \geq d+1$ and completes the proof.
\end{proof}

\begin{corollary}\label{cor:BorsukGraph}
For every $0< \alpha <2$ and an integer $d$, $\xi_\R(\overline{B(d,\alpha)}) \geq d+1$.
\end{corollary}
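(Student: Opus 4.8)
The plan is to reduce the (infinite) Borsuk graph $B(d,\alpha)$ to a suitable finite subgraph to which Theorem~\ref{thm:BorsukGraph} directly applies, exploiting the fact that the orthogonality dimension does not increase when passing to an induced subgraph.

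First I would fix $0<\alpha<2$ and set $\eps = 1-\alpha/2$, observing that $0<\eps<1$ holds precisely because $0<\alpha<2$, and that $2-2\eps = \alpha$. By compactness of $\S^{d-1}$ (cover it by the open balls of radius $\eps$ around its points and extract a finite subcover), there is a finite set $V\subseteq\S^{d-1}$ that forms an $\eps$-net: for every $x\in\S^{d-1}$ there is $y\in V$ with $\|x-y\|<\eps$. Let $G$ be the graph on vertex set $V$ in which $y,y'$ are adjacent iff $\|y-y'\|\ge 2-2\eps$. Since $2-2\eps=\alpha$, this $G$ is exactly the subgraph of $B(d,\alpha)$ induced on $V$, so $\overline{G}=\overline{B(d,\alpha)}[V]$. (More generally, any $\eps\in(0,1-\alpha/2]$ would do: then $G$ is a spanning subgraph of $B(d,\alpha)[V]$, which still suffices below.)

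Next I would use monotonicity of the orthogonality dimension under induced subgraphs: if $\overline{B(d,\alpha)}$ has no finite-dimensional orthogonal representation the desired bound is vacuous, while otherwise a $t$-dimensional orthogonal representation of $\overline{B(d,\alpha)}$ restricts to a $t$-dimensional orthogonal representation of $\overline{B(d,\alpha)}[V]=\overline{G}$, because distinct vertices that are non-adjacent in the induced subgraph are non-adjacent in the whole graph and the diagonal condition is inherited. Hence $\xi_\R(\overline{B(d,\alpha)})\ge \xi_\R(\overline{G})$. Finally, $V$ (with the chosen $\eps\in(0,1)$) and the adjacency threshold $2-2\eps$ meet exactly the hypotheses of Theorem~\ref{thm:BorsukGraph}, which yields $\xi_\R(\overline{G})\ge d+1$; combining the two inequalities gives $\xi_\R(\overline{B(d,\alpha)})\ge d+1$.

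The only points needing any care are the existence of the finite $\eps$-net (a routine compactness argument) and the bookkeeping that matches the net parameter $\eps$ to the Borsuk threshold via $2-2\eps=\alpha$; there is no new topological content, since all of the Borsuk--Ulam input has already been absorbed into Theorem~\ref{thm:BorsukGraph}.
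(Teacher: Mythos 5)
Your proposal is correct and follows essentially the same route as the paper: set $\eps = 1-\alpha/2$, take a finite $\eps$-net $V$ of $\S^{d-1}$ (the paper obtains it as a maximal $\eps$-separated set rather than via a finite subcover, which is an immaterial difference), observe that the resulting graph $G$ is a subgraph of $B(d,\alpha)$ so that $\xi_\R(\overline{B(d,\alpha)}) \geq \xi_\R(\overline{G})$, and conclude by Theorem~\ref{thm:BorsukGraph}.
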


\begin{proof}
For $0< \alpha <2$ and an integer $d$, let $V$ be a maximal collection of points in $\S^{d-1}$ with pairwise distances at least $\eps = 1-\frac{\alpha}{2}$. Observe that $V$ is finite and that for every $x \in \S^{d-1}$ there exists $y \in V$ for which $\|x-y\| < \eps$. The graph $G$ associated with $V$ and $\eps$ in Theorem~\ref{thm:BorsukGraph} is a subgraph of $B(d,\alpha)$, hence $\xi_\R(\overline{B(d,\alpha)}) \geq \xi_\R(\overline{G}) \geq d+1$, and we are done.
\end{proof}

\section{Fractional Upper Bounds on the Shannon Capacity}\label{sec:Shannon4}

Let $f$ be a real-valued non-negative function on graphs. As explained in Section~\ref{sec:Shannon}, if $f$ is a sub-multiplicative upper bound on the independence number then it forms an upper bound on the Shannon capacity.
A fractional variant $f^*$ of $f$ was introduced in~\cite{HuTS17}, and it was shown there that if $f$ is a sub-multiplicative upper bound on the independence number then so is $f^*$, hence $f^*$ also forms an upper bound on the Shannon capacity. For a graph $G$ on the vertex $V$, the definition of $f^*(G)$ is given in~\eqref{Primal}, and by duality it is equal to the value of the following linear program.
\begin{align}
  \label{Dual}
  \begin{split}
 \textup{minimize } &\sum_{S \subseteq V} q(S) \cdot f(G[S]) \\
 \textup{subject to }  &\sum_{S: x \in S}q(S) \ge 1 \textup{~~~ for each vertex $x \in V$},\\
                &q(S)\ge 0 \textup{~~~ for each set $S \subseteq V$}.
              \end{split}
\end{align}
\noindent
Note that every graph $G$ satisfies $f^*(G) \leq f(G)$, as follows by taking $q(S)=1$ for $S=V$ and $q(S)=0$ otherwise.
We study here the integrality gap of fractional upper bounds $f^*$ on the Shannon capacity, namely, the largest possible ratio $f(G)/f^*(G)$ over all $n$-vertex graphs $G$.

\subsection{Upper Bound}

We prove now Theorem~\ref{thm:fractional_upper}, which claims that for sub-additive functions $f$ on graphs, the integrality gap of $f^*$ is at most logarithmic in the number of vertices. Recall that $f$ is sub-additive if for every graph $G$ on the vertex set $V$ and every sets $S_1$ and $S_2$ such that $V = S_1 \cup S_2$, it satisfies $f(G) \leq f(G[S_1]) + f(G[S_2])$.

\begin{proof}[ of Theorem~\ref{thm:fractional_upper}]
We use the dual definition of $f^*$ given in~\eqref{Dual}.
Let $G=(V,E)$ be an $n$-vertex graph, and let $q$ be an optimal solution of~\eqref{Dual}. Denote $Q = \sum_{S \subseteq V}{q(S)} \geq 1$, and let $D$ be the distribution over the subsets of $V$ that assigns to every set $S \subseteq V$ the probability $\frac{q(S)}{Q}$.
For, say, $t = \lceil Q \cdot \ln (3n) \rceil$, let $S_1,\ldots,S_t$ be $t$ random subsets of $V$ chosen independently from the distribution $D$.

We first claim that the probability that the sets $S_1,\ldots,S_t$ do not cover the entire vertex set $V$ is at most $1/3$.
Indeed, for every $x \in V$ and $i \in [t]$ the probability that $x \in S_i$ is
\[ \sum_{S:x \in S}{\frac{q(S)}{Q}} = \frac{1}{Q} \cdot \sum_{S:x \in S}{q(S)} \geq \frac{1}{Q},\]
where the inequality holds since $q$ is a feasible solution of~\eqref{Dual}.
Hence, the probability that $x$ does not belong to any of the sets $S_1,\ldots,S_t$ is at most
\[\Big (1-\frac{1}{Q} \Big )^t \leq e^{-t/Q} \leq \frac{1}{3n}.\]
By the union bound, the probability that there exists $x \in V$ such that $x \notin \cup_{i=1}^{t}{S_i}$ is at most $1/3$.

We next consider the expectation of the sum $\sum_{i=1}^{t}{f(G[S_i])}$.
For every $i \in [t]$ we have
\[ \expectation [f(G[S_i])] = \sum_{S \subseteq V} {\frac{q(S)}{Q} \cdot f(G[S])} = \frac{f^*(G)}{Q},\]
where the second equality holds since $q$ is an optimal solution of~\eqref{Dual}.
By linearity of expectation,
\[ \expectation \Big [\sum_{i=1}^{t}{f(G[S_i])} \Big ] = t \cdot \frac{f^*(G)}{Q} = \lceil Q \cdot \ln (3n) \rceil \cdot \frac{f^*(G)}{Q}  \leq 2 \cdot \ln (3n) \cdot f^*(G).\]
By Markov's inequality, the probability that $\sum_{i=1}^{t}{f(G[S_i])} \geq 6 \cdot \ln (3n) \cdot f^*(G)$ is at most $1/3$.

Finally, by the union bound, there are sets $S_1,\ldots,S_t$ that cover the vertex set $V$ and satisfy $\sum_{i=1}^{t}{f(G[S_i])} \leq 6 \cdot \ln (3n) \cdot f^*(G)$. For these sets the sub-additivity of $f$ implies that
\[f(G) \leq \sum_{i=1}^{t}{f(G[S_i])} \leq 6 \cdot \ln (3n) \cdot f^*(G),\] and we are done.
\end{proof}

\subsection{Lower Bounds}

We turn to show  that the ratio between $f$ and $f^*$ is unbounded for several upper bounds $f$ on the Shannon capacity. In fact, we present an explicit family of graphs for which $f^*$ is bounded from above by a constant whereas $f$ is arbitrarily large.
We consider here the graph parameters $\xi_\Fset$ and ${\minrank}_\Fset$, which, as mentioned before, are sub-additive and sub-multiplicative upper bounds on the independence number for every field $\Fset$.

We need the well-studied notion of fractional chromatic number of graphs. For a graph $G$ on the vertex set $V$, let $\calI(G)$ denote the collection of all independent sets of $G$. The {\em fractional chromatic number} of $G$, denoted by $\chi_f(G)$, is defined as the value of the following linear program.
\begin{align}
  \label{fractional_chi}
  \begin{split}
 \textup{minimize } &\sum_{I \in \calI(G)} q(I) \\
 \textup{subject to }  &\sum_{I \in \calI(G): x \in I}q(I) \ge 1 \textup{~~~ for each vertex $x \in V$},\\
                &q(I)\ge 0 \textup{~~~ for each set $I \in \calI(G)$}.
              \end{split}
\end{align}
\noindent
The following claim, given in~\cite{HuTS17}, follows directly from~\eqref{Dual} and~\eqref{fractional_chi}.
\begin{claim}\label{claim:f*chi_f}
Let $f$ be a function on graphs satisfying $f(G)=1$ whenever $G$ is complete. Then for every graph $G$, $f^*(G) \leq \chi_f(\overline{G})$.
In particular, for every graph $G$ and every field $\Fset$,
\[{\minrank}_\Fset^*(G) \leq \xi_\Fset^*(G) \leq \chi_f(\overline{G}).\]
\end{claim}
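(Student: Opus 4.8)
The plan is to produce, from any feasible solution of the linear program defining $\chi_f(\overline{G})$, a feasible solution of the dual program~\eqref{Dual} computing $f^*(G)$ that has the same objective value. First I would observe that the independent sets of $\overline{G}$ are exactly the cliques of $G$, so the program~\eqref{fractional_chi} applied to $\overline{G}$ minimizes $\sum_{S} q(S)$ over nonnegative weights $q$ supported on cliques $S$ of $G$ subject to the covering constraint $\sum_{S \ni x} q(S) \ge 1$ for every vertex $x$.

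Next I would take an optimal such $q$, so that $\sum_{S} q(S) = \chi_f(\overline{G})$, and view it as a weight function on all subsets $S \subseteq V$ by declaring $q(S) = 0$ whenever $S$ is not a clique of $G$. This $q$ is feasible for~\eqref{Dual}: nonnegativity is immediate, and the covering constraint is literally the same one. It remains to evaluate its objective $\sum_{S \subseteq V} q(S) \cdot f(G[S])$, and this is where the hypothesis on $f$ is used: the sum ranges only over cliques $S$ of $G$, for which $G[S]$ is complete and hence $f(G[S]) = 1$, so the objective equals $\sum_{S} q(S) = \chi_f(\overline{G})$. Since $f^*(G)$ is the minimum value of~\eqref{Dual}, this yields $f^*(G) \le \chi_f(\overline{G})$.

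For the ``in particular'' clause I would note that $\xi_\Fset$ and ${\minrank}_\Fset$ both equal $1$ on complete graphs, so the first part applied with $f = \xi_\Fset$ gives $\xi_\Fset^*(G) \le \chi_f(\overline{G})$; moreover ${\minrank}_\Fset(H) \le \xi_\Fset(H)$ for every graph $H$, so for any fixed $q$ the objective of~\eqref{Dual} with $f = {\minrank}_\Fset$ is at most that with $f = \xi_\Fset$, whence ${\minrank}_\Fset^*(G) \le \xi_\Fset^*(G)$. I do not anticipate a real obstacle here: the argument is a direct comparison of two linear programs, and the only care needed is the dictionary between independent sets of $\overline{G}$ and cliques of $G$ together with the normalization $f = 1$ on complete graphs, which is exactly what makes the clique-supported solutions of~\eqref{Dual} ``cost'' the same as in the fractional-chromatic program.
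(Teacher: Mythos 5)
Your argument is correct and is exactly the reasoning the paper has in mind when it says the claim ``follows directly from~\eqref{Dual} and~\eqref{fractional_chi}'': an optimal fractional clique cover of $G$ (fractional coloring of $\overline{G}$), extended by zero to all subsets, is feasible for~\eqref{Dual} and costs $\chi_f(\overline{G})$ because $f=1$ on the cliques in its support, and the comparison ${\minrank}_\Fset^*\leq\xi_\Fset^*$ follows from ${\minrank}_\Fset\leq\xi_\Fset$ term by term. No gaps; this matches the paper's (implicit) proof.
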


For integers $d \geq 2s$, consider the Kneser graph $K(d,s)$ (see Section~\ref{sec:Kneser}).
The number of vertices in $K(d,s)$ is $\binom{d}{s}$. By the Erd{\H{o}}s-Ko-Rado theorem~\cite{ErdosKoRado61} its independence number is $\alpha(K(d,s)) = \binom{d-1}{s-1}$, and as already mentioned, its chromatic number is $\chi(K(d,s)) = d-2s+2$~\cite{LovaszKneser}.
It is known that every vertex-transitive graph $G=(V,E)$ (that is, a graph whose automorphism group is transitive), satisfies $\chi_f(G) = \frac{|V|}{\alpha(G)}$ (see, e.g.,~\cite{GraphTheoryBook}). Hence,
\begin{eqnarray}\label{eq:fracKneser}
\chi_f(K(d,s)) = \frac{\binom{d}{s}}{\binom{d-1}{s-1}} = \frac{d}{s}.
\end{eqnarray}

Now we are ready to derive the following theorem, which confirms Theorem~\ref{thm:IntroIntegrality}.
\begin{theorem}\label{thm:AllBounds}
For every fixed $\eps >0$, there exists an explicit family of $n$-vertex graphs $G$ such that
\begin{enumerate}
  \item\label{itm:1} ${\minrank}^*_\Fset(G) \leq \xi^*_\Fset(G) \leq 2+\eps$ ~~for every field $\Fset$,
  \item\label{itm:2} $\xi_\R (G)= \Theta(\log n)$,
  \item\label{itm:3} $\xi_\C (G)= \Theta(\log n)$,
  \item\label{itm:4} ${\minrank}_\R (G) \geq \Omega( \sqrt{ \log n})$, and
  \item\label{itm:5} $\xi_\Fset(G) \geq {\minrank}_\Fset (G) \geq \Omega( \log \log n)$ ~~for every fixed finite field $\Fset$.
\end{enumerate}
\end{theorem}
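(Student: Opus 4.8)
The plan is to take $G$ to be the complement of a Kneser graph whose parameters are tuned so that the ratio $d/s$ is only slightly above $2$ while the difference $d-2s$ still tends to infinity. Concretely, fix $\eps>0$; for every integer $s\ge \lceil 2/\eps\rceil$ put $k=\lfloor \eps s/2\rfloor$ and $d=2s+k$, and let $G=\overline{K(d,s)}$, a graph on $n=\binom{d}{s}$ vertices (so $n\to\infty$ as $s\to\infty$, and the family is explicit). First I would record the elementary parameter estimates. From $0\le k\le \eps s/2$ we get $d/s\le 2+\eps/2<2+\eps$, while $k=\Theta_\eps(s)$ for $s$ large, so $d-2s+2=k+2=\Theta_\eps(s)$. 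Moreover $2^s\le \binom{d}{s}\le 2^{d}\le 2^{(2+\eps)s}$, hence $\log_2 n=\Theta_\eps(s)$; combining the two, $d-2s+2=\Theta_\eps(\log n)$.

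With these estimates in hand each item follows by quoting a result already in the paper. For Item~\ref{itm:1}, Claim~\ref{claim:f*chi_f} gives ${\minrank}^*_\Fset(G)\le \xi^*_\Fset(G)\le \chi_f(\overline{G})=\chi_f(K(d,s))$, which equals $d/s\le 2+\eps$ by~\eqref{eq:fracKneser}. For Item~\ref{itm:2}, Corollary~\ref{cor:xiKneser}\eqref{itm:Kneser} gives $\xi_\R(G)=d-2s+2=\Theta(\log n)$ on the nose. For Item~\ref{itm:3}, the lower bound $\xi_\C(G)\ge (d-2s+2)/2=\Omega(\log n)$ is Corollary~\ref{cor:xiKneser}, and the matching upper bound is the trivial $\xi_\C(G)\le \chi(\overline{G})=\chi(K(d,s))=d-2s+2=O(\log n)$ from the inequality $\xi_\Fset\le\chi(\overline{\cdot})$ recorded in the introduction. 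For Item~\ref{itm:4}, Corollary~\ref{cor:xiKneser} gives ${\minrank}_\R(G)\ge \sqrt{(d-2s+2)/2}=\Omega(\sqrt{\log n})$. Finally for Item~\ref{itm:5}, one has $\xi_\Fset(G)\ge {\minrank}_\Fset(G)$ always, and Lemma~\ref{claim:minrankF} gives ${\minrank}_\Fset(G)\ge \log_{|\Fset|}\chi(\overline{G})=\log_{|\Fset|}(d-2s+2)=\Omega_\Fset(\log\log n)$, using $d-2s+2=\Theta(\log n)$ and that $\Fset$ is fixed.

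There is essentially no deep obstacle here: the whole point is the choice of parameters, namely arranging $d/s\le 2+\eps$ and $d-2s\to\infty$ at once, together with the observation that $\log\binom{d}{s}=\Theta_\eps(s)$, so that the single quantity $d-2s+2$ — which drives all five lower bounds through the Kneser corollary and Lemma~\ref{claim:minrankF} — is genuinely of order $\log n$ (and its logarithm is of order $\log\log n$). The minor bookkeeping points to watch are the regime of validity ($s\ge\lceil 2/\eps\rceil$ ensures $k\ge 1$ and $d\ge 2s$) and the fact that the constants implicit in the $\Theta$/$\Omega$ notation may depend on the fixed $\eps$ (and on $|\Fset|$ in Item~\ref{itm:5}), which the statement allows.
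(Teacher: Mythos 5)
Your proposal is correct and follows essentially the same route as the paper: take $G=\overline{K(d,s)}$ with $d/s$ just above $2$, bound $\xi^*$ and $\minrank^*$ by $\chi_f(K(d,s))=d/s$ via Claim~\ref{claim:f*chi_f} and~\eqref{eq:fracKneser}, and get all lower bounds from Corollary~\ref{cor:xiKneser} and Lemma~\ref{claim:minrankF} together with $d-2s+2=\Theta(\log n)$. The only difference is your explicit integrality bookkeeping ($d=2s+\lfloor\eps s/2\rfloor$), a minor refinement of the paper's choice $d=(2+\eps)s$.
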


\begin{proof}
For a fixed $\eps>0$, let $d$ and $s$ be two integers satisfying $d = (2+\eps) \cdot s$. Let $G$ be the complement of the Kneser graph $K(d,s)$.
We show that $G$ satisfies the assertion of the theorem.
By~\eqref{eq:fracKneser}, we have
\[\chi_f(\overline{G}) = \chi_f(K(d,s)) = \frac{d}{s} = 2+\eps,\]
hence Item~\ref{itm:1} of the theorem follows from Claim~\ref{claim:f*chi_f}.
The graph $G$ has $n = \binom{d}{s} = 2^{\Theta(s)}$ vertices, so we have $d-2s+2 = \eps \cdot s +2 = \Theta(\log n)$. For the upper bounds in Items~\ref{itm:2} and~\ref{itm:3} notice that for every field $\Fset$, $\xi_\Fset(G) \leq \chi(\overline{G}) = d-2s+2 = \Theta(\log n)$.
The lower bounds in Items~\ref{itm:2},~\ref{itm:3}, and~\ref{itm:4} follow from Corollary~\ref{cor:xiKneser}.
Finally, Item~\ref{itm:5} follows from Claim~\ref{claim:minrankF}.
\end{proof}

\section*{Acknowledgments}
We would like to thank Itzhak Tamo for useful conversations and for his comments on an early version of the paper.
We are also grateful to Florian Frick and G\"unter M. Ziegler for helpful discussions.

\bibliographystyle{abbrv}
\bibliography{kneser}

\end{document}